\documentclass[12pt]{article}


\usepackage{amsmath}
\usepackage{amsthm}
\usepackage{amssymb}
\usepackage{cite}
\usepackage{url}
\usepackage{footmisc}
\usepackage{graphicx}
\usepackage{epstopdf}
\usepackage{chngcntr}
\usepackage{enumitem}
\usepackage{hhline}
\usepackage{array}
\usepackage{hyperref}
\usepackage{color}
\usepackage{algorithm}
\usepackage{algpseudocode}

\hypersetup{colorlinks=false}


\voffset-1.1cm
\setlength\textwidth{15.6cm}
\setlength\textheight{22.4cm}

\lefthyphenmin=2 
\righthyphenmin=3

\if@twoside
  \setlength\oddsidemargin{.45cm}
  \setlength\evensidemargin{.05cm}
\else 
  \setlength\oddsidemargin{.45cm}
  \setlength\evensidemargin{.45cm}
\fi

\numberwithin{equation}{section}
\counterwithin{figure}{section}
\counterwithin{table}{section}

\theoremstyle{plain}
\newtheorem{theorem}{Theorem}[section]
\newtheorem{lemma}[theorem]{Lemma}
\newtheorem{proposition}[theorem]{Proposition}

\newtheorem{problem}{Problem}

\theoremstyle{definition}
\newtheorem{definition}{Definition}[section]
\newtheorem{assumption}{Assumption}[section]

\newtheorem*{PNKR}{The PNKR method}
\newtheorem*{SPNKR}{The reduced PNKR method}

\theoremstyle{remark}



\newcommand{\norm}[1]{\left\|#1\right\|}
\newcommand{\abs}[1]{\left\vert#1\right\vert}
\newcommand{\spr}[1]{\left\langle\,#1\,\right\rangle}
\newcommand{\kl}[1]{\left(#1\right)}
\newcommand{\Kl}[1]{\left\{#1\right\}}



\newcommand{\R}{\mathbb{R}} 

\newcommand{\N}{\mathbb{N}}






\newcommand{\yd}{y^{\delta}}





\newcommand{\s}{\text{s}}


\newcommand{\okd}{\omega_k^\delta}
\newcommand{\okdt}{\Tilde{\omega}_k^\delta}
\newcommand{\akd}{\alpha_k^\delta}

\newcommand{\fkd}{f_k^\delta}
\newcommand{\fkpd}{f_{k+1}^\delta}
\newcommand{\uvkd}{\uv_k^\delta}
\newcommand{\uvkpd}{\uv_{k+1}^\delta}

\newcommand{\fkdt}{\tilde{f}_{k}^\delta}
\newcommand{\fkpdt}{\tilde{f}_{k+1}^\delta}

\newcommand{\uikd}{u_{i,k}^\delta}
\newcommand{\uikpd}{u_{i,k+1}^\delta}

\newcommand{\xv}{\boldsymbol{x}}
\newcommand{\yv}{\boldsymbol{y}}
\newcommand{\yvd}{\boldsymbol{y}^\delta}
\newcommand{\Kv}{\boldsymbol{K}}

\newcommand{\vmin}{{v_\text{min}}}
\newcommand{\vmax}{{v_\text{max}}}
\newcommand{\zmin}{{z_\text{min}}}
\newcommand{\zmax}{{z_\text{max}}}
\newcommand{\tmin}{{t_\text{min}}}
\newcommand{\tmax}{{t_\text{max}}}
\newcommand{\lmin}{{\lambda_\text{min}}}
\newcommand{\lmax}{{\lambda_\text{max}}}

\newcommand{\LtOT}{{L^2\kl{\Omega \times \Theta}}}
\newcommand{\HsOT}{{H^s\kl{\Omega \times \Theta}}}
\newcommand{\LtOL}{{L^2\kl{\Omega \times \Lambda}}}
\newcommand{\LtO}{{L^2\kl{\Omega}}}
\newcommand{\LtTL}{{L^2\kl{\Theta\times\Lambda}}}

\newcommand{\kmod}{{[k]}}
\newcommand{\Kt}{\tilde{K}}

\newcommand{\Mv}{\boldsymbol{M}}
\newcommand{\Nv}{\boldsymbol{N}}
\newcommand{\Hv}{\boldsymbol{H}}
\newcommand{\Iv}{\boldsymbol{I}}
\newcommand{\uv}{\boldsymbol{u}}
\newcommand{\wv}{\boldsymbol{w}}
\newcommand{\zv}{\boldsymbol{z}}

\newcommand{\XM}{{X_M}}
\newcommand{\YN}{{Y_N}}
\newcommand{\bv}{{\boldsymbol{\beta}}}


\usepackage{xcolor}


\title{A projected Nesterov-Kaczmarz approach to stellar population-kinematic distribution reconstruction in Extragalactic Archaeology}
\author{
Fabian Hinterer\footnote{Johannes Kepler University Linz, Institute of Industrial Mathematics, Altenbergerstra{\ss}e 69, A-4040 Linz, Austria, (fabian.hinterer@indmath.uni-linz.ac.at)} ,
Simon Hubmer\footnote{Johann Radon Institute Linz, Altenbergerstra{\ss}e 69, A-4040 Linz, Austria, (simon.hubmer@ricam.oeaw.ac.at), Corresponding author.} ,
Prashin Jethwa\footnote{University of Vienna, Department of Astrophysics, T{\"u}rkenschanzstra{\ss}e 17, A-1180 Vienna, Austria, (prashin.jethwa@univie.ac.at)} ,
\\
Kirk M. Soodhalter\footnote{Trinity College, School of Mathematics, Dublin 2, Ireland, (ksoodha@maths.tcd.ie)} ,
Glenn van de Ven\footnote{University of Vienna, Department of Astrophysics, T{\"u}rkenschanzstra{\ss}e 17, A-1180 Vienna, Austria, (glenn.vandeven@univie.ac.at)} , 
Ronny Ramlau\footnote{Johannes Kepler University Linz, Institute of Industrial Mathematics, Altenbergerstra{\ss}e 69, A-4040 Linz, Austria, (ronny.ramlau@jku.at)} \footnote{Johann Radon Institute Linz, Altenbergerstra{\ss}e 69, A-4040 Linz, Austria, (ronny.ramlau@ricam.oeaw.ac.at)} 
}

\begin{document}

\maketitle

\begin{abstract}

In this paper, we consider the problem of reconstructing a galaxy's stellar population-kinematic distribution function from optical integral field unit measurements. These quantities are connected via a high-dimensional integral equation. To solve this problem, we propose a projected Nesterov-Kaczmarz reconstruction (PNKR) method, which efficiently leverages the problem structure and incorporates physical prior information such as smoothness and non-negativity constraints. To test the performance of our reconstruction approach, we apply it to a dataset simulated from a known ground truth density, and validate it by comparing our recoveries to those obtained by the widely used pPXF software.

\smallskip
\noindent \textbf{Keywords.} Astrophysics, Galactic Archaeology, Inverse and Ill-Posed Problems, Kaczmarz Method, Nesterov Acceleration, Large Scale Problems
\end{abstract}



\section{Introduction}
Galactic archaeology aims to study the formation and evolution of galaxies from observations in the local Universe, i.e., as viewed in the present day \cite{Freeman_Bland_Hawthorn_02}. Information about a galaxy's past can be inferred from present-day observations since certain quantities are approximately conserved throughout galactic evolution. The quantities describe stellar-orbits, which can be inferred from measured stellar kinematics, and stellar-population properties such as age and chemistry. In the Milky Way for example, judicious selections in population-kinematic space have led to the discovery of an ancient, massive galaxy merger in the inner-halo of our Galaxy \cite{Belokurov18,helmi18}. For external galaxies, where individual stars cannot be resolved, the task of \emph{extragalactic} archaeology becomes more difficult.

The outskirts of galaxies are filled by stellar debris of ongoing mergers (see Figure \ref{fig_stream_survey}), and upcoming deep imaging surveys e.g. \cite{stellar_stream_legacy_survey} will detect such debris around thousands of nearby galaxies. Spectroscopic data is a powerful complement to imaging and offers access to stellar-population and stellar-kinematic information. Integral-field unit (IFU) datacubes combine the two types of data: they are 2D images with a third dimension of spectral wavelength per image pixel. Several surveys have obtained IFU datacubes for thousands of galaxies e.g. \cite{califa, sami2, Bundy15, Sarzi18} though these are typically limited to relatively small fields-of-view. These datasets could be treasure troves to detect ancient, massive merger remnants in the central regions of galaxies e.g. \cite{Zhu22}.

\begin{figure}[ht!]
    \centering
    \includegraphics[width=\textwidth]{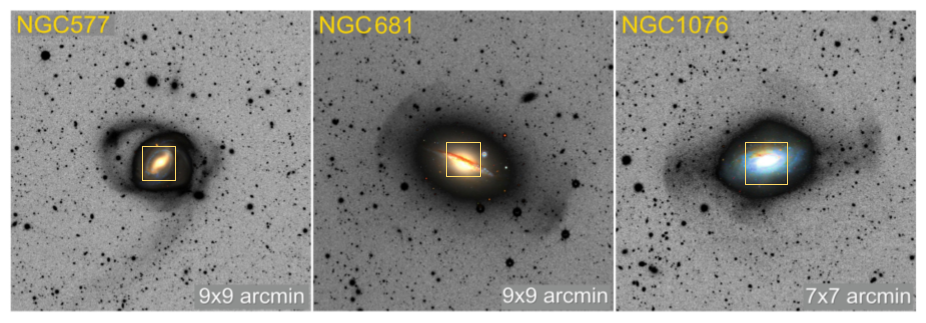}
    \caption{Three galaxies surrounded by the remnants of ongoing galactic mergers. The inset yellow square shows the smaller field of view of the MUSE IFU instrument on the Very Large Telescope (VLT) of the European Southern Observatory (ESO). Stellar population-kinematic modelling in the centers of galaxies may reveal ancient mergers. Images adapted from the Stellar Stream Legacy Survey \cite{stellar_stream_legacy_survey}.}
    \label{fig_stream_survey}
\end{figure}

Typical IFU data-analyses are not optimised for galactic archaeology. These analyses spatially bin the datacube into local spectra \cite{Cappellari03} which are modelled independently using spectral fitting tools e.g. \cite{Cappellari17}. By spatially binning the datacube, we limit what we can detect: global signals which are present when the datacube is considered altogether may be hidden in the noise when it is dissected into independent regions. Physically, we expect global signals to exist, since orbital dynamics act to disperse features over large areas. Our goal in this paper is to develop a technique for full datacube-modelling for stellar population-kinematic modelling of galaxies. 

Existing tools for 3D-datacube modelling of galaxies have limitations which make them unsuitable for galactic archaeology. To date, they all assume a disk model for the galaxy and/or fit data only for a single gas emission-line \cite{davis13, di_teodoro_15, Bouche_15, Bekiaris16, rizzo18, Varidel19}. Single line data is not sufficient for stellar modelling, where several absorption features must be simultaneously considered. Disk-models are not flexible enough for galactic archaeology, since satellite mergers can deposit stars on significantly non disk-like orbits. Though different disk models make very different assumptions - e.g. some based on a tilted-rings, or parametric or non-parametric rotation curves - none of these are able to capture the diversity of possible galactic orbits. For our proposed method, we therefore will use a non-parametric description of the stellar distribution function, instead promoting \emph{model-independent} physical prior-information such as spatial smoothness and non-negativity constraints in the recovery.

Mathematically, we will see that this reconstruction problem amounts to solving a very large scale inverse problem resulting from an integral equation with a specific system structure. In this paper, we mathematically analyse this problem and  propose a projected Nesterov-Kaczmarz reconstruction (PNKR) method for its solution. The PNKR method leverages the system structure and incorporates the available physical prior information, resulting in an accurate and numerical efficient algorithm. We demonstrate the performance of the PNKR method on numerical tests, including a comparison of our recoveries to those obtained by the widely used pPXF software \cite{Cappellari17}.
 
The outline of this paper is as follows: In Section~\ref{sect_background}, we review the necessary astrophysical background of galactic archaeology, and in Section~\ref{sect_model} derive a mathematical model for the corresponding density reconstruction problem. In Section~\ref{sect_reconstruction}, we then introduce our PNKR method for the efficient solution of this problem, which we test and validate numerically in Section~\ref{sect_numerical_examples}, followed by a short conclusion in Section~\ref{sect_conclusion}.

\section{Astrophysical background}\label{sect_background}

In this section, we derive a mathematical model for galactic archaeology. Our aim is to infer the galaxy's stellar population distribution function $f = f(\xv,v,z,t)$ from optical integral field spectroscopy measurements $y = y(\xv,\lambda)$. As indicated, these functions depend on the following variables:
    \begin{itemize}
        \item $\xv = (x_1,x_2)$ denoting 2D positional coordinates,
        \item $v$ denoting the velocity along the line-of-sight of stars in the galaxy,
        \item $z$ denoting the metal content of the stellar population, also known as metallicity,
        \item $t$ denoting the age of the stellar population,
        \item $\lambda$ denoting the wavelength of measured light.
    \end{itemize}
The target function $f$ therefore encodes the distribution of galaxy stars in the 5D space of (2D) position, velocity, age and metallicity. IFU data are obtained from telescope observations, where light from a 2D image is split into constituent wavelengths. Though different instrument designs achieve this goal in different ways (e.g.\ using microlens arrays, fiber bundles, or image splitters) standard instrument pipelines convert the raw-data into the 3D datacube $y(\xv,\lambda)$, which we take as the input for our problem.

The distribution function $f$ and the data $y$ are connected via templates of single-stellar population (SSP) spectra $S = S(\lambda;z,t)$. These are tabulated functions which describe the rest-wavelength spectrum of a stellar population, i.e. the flux of light per wavelength interval of a population of stars with metal content $z$ and age $t$, which is stationary with respect to the observer. The template spectra $S(\lambda;z,t)$ are determined empirically from the spectra of nearby stars and isochrone models, and hence are known to a high degree of accuracy. Figure~\ref{fig_something_s} shows some examples of the MILES SSP models \cite{Vazdekis_SanchezBlazquez_FalconBarroso_Cenarro_Beasley_Cardiel_Gorgas_Peletier_2010}, which we adopt in this work. The distribution function $f$ itself is normalised such that the total stellar mass of the galaxy is given by
    \begin{equation}\label{eq_mass_normalisation}  
        M^*_\text{total} = \int \int \int \int f(x, v, z, t) \, dx \, dv \, dz \, dt \,,
    \end{equation}
and $f$ must be everywhere non-negative.

\begin{figure}[ht!]
    \centering
    \includegraphics[width=\textwidth]{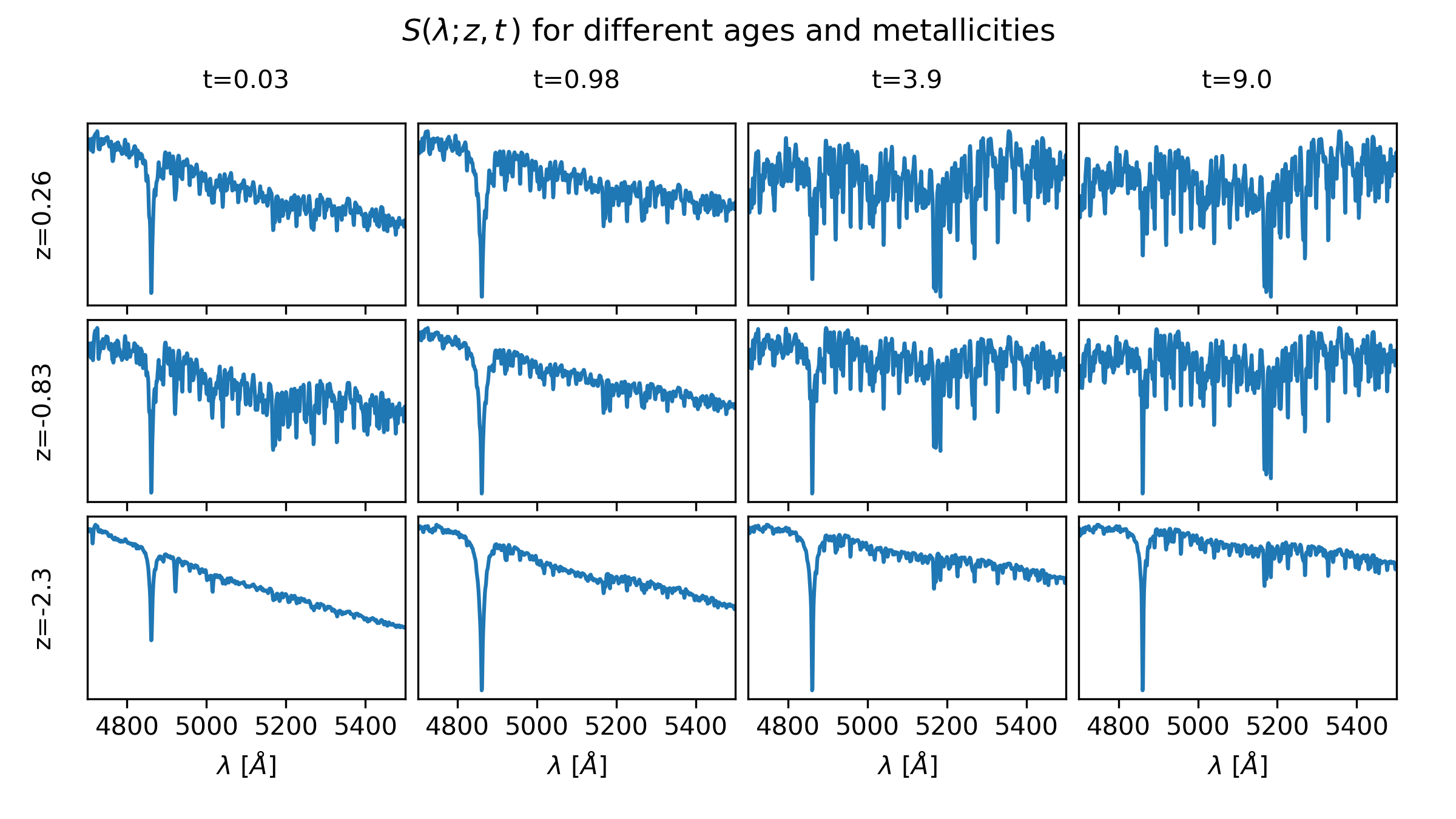}
    \caption{SSP templates $S(\lambda;z,t)$. We show the MILES SSP models as a function of wavelength at four ages $t$ labelled with units Gyr and three metallicities $z$ with units of [M/H] i.e. the logarithmic ratio of metal to hydrogen content, where we use the astronomy nomenclature of \emph{metal} to mean any element heavier than helium. Note that the range of the y-axis is scaled independently for each panel.}
    \label{fig_something_s}
\end{figure}

The relationship between $f$, $S$, and $y$ can be expressed as 
    \begin{equation}\label{eq_main_general}  
        \int_{\zmin}^{\zmax} \int_{\vmin}^{\vmax} \int_{\tmin}^{\tmax}  \frac{1}{1 + v/c} S\kl{\frac{\lambda}{1 + v/c} ; z, t} f(\xv, v, z, t) \, dt \, dv \, dz 
        \, = \,y(\xv, \lambda) \,,
    \end{equation}
where $c$ denotes the speed of light in vacuum. This is similar to the equation used to model a single spectrum e.g. see \cite[Equation~11]{Cappellari17}. The integrals over $z$ and $t$ allow for galaxies to host a superposition of stellar populations. The two factors of $1+v/c$ perform Doppler shifting of light given a line-of-sight velocity $v$: the factor inside the function $S$ shifts from rest-wavelength to the observed wavelength, while the factor outside ensures energy conservation, compensating for the wavelength stretching. A difference with respect to the single spectrum equation is that the full joint-distribution $f(\xv, v, z, t)$ appears in equation \eqref{eq_main_general}. In contrast, the single spectrum model must make the assumption that $f$ can be factorised as
    \begin{equation}\label{eq_factor_joint}  
        f(\xv, v, z, t) = f_1(\xv) f_2(v | \xv) f_3(t, z| \xv) \,.
    \end{equation}
This factorisation asserts that locally, kinematics and populations are independent. This assumption is known to be invalid in the Milky Way, where significant correlations between stellar age and velocity dispersion are observed in the solar neighbourhood \cite{Sharma21}. Hence, in addition to detecting weak features, a benefit of modelling the full datacube rather than single spectra is that we are no longer forced to assume the factorisation in equation \eqref{eq_factor_joint}.

\section{Mathematical model}\label{sect_model}

Equation \eqref{eq_main_general} can be written in a simpler form by grouping the variables $v,z,t$ together into the vector $\theta := (v,z,t)$ and defining
    \begin{equation}\label{def_kernel}
        k(\theta,\lambda) := k(v,z,t,\lambda) := \frac{1}{1 + v/c} S\kl{\frac{\lambda}{1 + v/c} ; z, t} \,.
    \end{equation}
With this, \eqref{eq_main_general} can be written in the form
    \begin{equation}\label{eq_main_compact}
        \int_\Theta k(\theta,\lambda) f(\xv,\theta) \, d \theta \, = \, y(\xv,\lambda) \,,
    \end{equation}   
where the set $\Theta \subset \R^3$ is defined by
    \begin{equation}\label{def_Theta}
        \Theta:= \Kl{\theta = (v,z,t) \in \R^3 \, \vert \, v \in (\vmin,\vmax) \,, z \in (\zmin,\zmax), t \in (\tmin,\tmax) } \,.
    \end{equation}
Equation \eqref{eq_main_compact} closely resembles a Fredholm integral equation of the first kind in standard form, which suggests to consider its solution within the framework of linear (ill-posed) operator equations \cite{Engl_Hanke_Neubauer_1996}. For this, we start by making the following 

\begin{assumption}\label{ass_main}
Let $\vmin < \vmax$, $\zmin < \zmax$, $\tmin < \tmax$, and $\lmin < \lmax$ be real numbers, and let $\Omega \subset \R^2$ be bounded and Jordan measurable. Furthermore, let $\Theta \subset \R^3$ be defined as in \eqref{def_Theta} and let $\Lambda := (\lmin,\lmax) \subset \R$. In addition, let the function $S = S(\lambda;z,t)$ be such that for $k$ defined as in \eqref{def_kernel} there holds $k\in \LtTL$.
\end{assumption}

The assumption that $k \in \LtTL$ is physically meaningful, as can also be seen from the spectra $S(\lambda;z,t)$ depicted in Figure~\ref{fig_something_s}. Using this, we now make the following
\begin{definition}
Let Assumption~\ref{ass_main} hold and let $0 \leq s \in \R$. Then we define
    \begin{equation}\label{def_operator_K}
    \begin{split}
        K \, : \, \HsOT \to \LtOL  \,,
        \qquad
        f \mapsto (K f)(\xv, \lambda) := \int_\Theta  k(\theta, \lambda) f(\xv, \theta) \,d\theta \,,
    \end{split}
    \end{equation}
where $\HsOT$ denotes the standard Sobolev space of order $s$ as defined e.g.~\cite{Adams_Fournier_2003}.
\end{definition}

With this definition, \eqref{eq_main_compact} can be rewritten as the linear operator equation
    \begin{equation}\label{Kf=y}
        K f = y \,.
    \end{equation}
Note that by choosing the Sobolev space $\HsOT$ as the definition space of $K$, we implicitly made the physically motivated assumption that the true distribution function $f$ is smooth in all variables. Hence, we arrive at the following mathematical

\begin{problem}\label{prob_K}
Let Assumption~\ref{ass_main} hold and let $y \in \LtOL$. Furthermore, let $0\leq s\in \R$ and let $K : \HsOT \to \LtOL$ be defined as in \eqref{def_operator_K}. Then the \emph{galactic archaeology problem with continuous wavelength measurements} consists in finding a function $f \in \HsOT$ which satisfies the operator equation \eqref{Kf=y}. 
\end{problem}

A number of basic properties of the integral operator $K$ are collected in
\begin{lemma}\label{lemma_properties_K}
Let Assumption~\ref{ass_main} hold and let $0\leq s\in \R$. Then the linear operator $K : \HsOT \to \LtOL$ defined in \eqref{def_operator_K} is bounded. Furthermore, the adjoint operator $K^*$ of $K$ is given by
    \begin{equation}\label{eq_K_adj}
        (K^*y)(\xv,\theta) =  E_s^*\kl{ \int_\Lambda  k(\theta, \lambda)  y(\xv,\lambda) \,d\lambda } \, ,
    \end{equation}
where $E_s : \HsOT \to \LtOT$ denotes the Sobolev embedding operator.
\end{lemma}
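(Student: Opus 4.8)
The plan is to factor $K$ through the Sobolev embedding. Write $K = \Kt \circ E_s$, where $E_s : \HsOT \to \LtOT$ is the Sobolev embedding and $\Kt : \LtOT \to \LtOL$ is the integral operator acting only in the $\theta$-variable,
\[
    (\Kt g)(\xv,\lambda) := \int_\Theta k(\theta,\lambda)\, g(\xv,\theta) \,d\theta \,.
\]
The first task is to show that $\Kt$ is well-defined and bounded with $\norm{\Kt} \le \norm{k}_{\LtTL}$. For a.e.\ fixed $\xv$, the map $g(\xv,\cdot) \mapsto (\Kt g)(\xv,\cdot)$ is the Hilbert--Schmidt operator on the $\theta$-variable with kernel $k$; applying the Cauchy--Schwarz inequality in $\theta$ and then Tonelli's theorem to integrate successively over $\lambda$ and $\xv$ yields
\[
    \norm{\Kt g}_{\LtOL}^2 \le \int_\Omega \int_\Lambda \kl{\int_\Theta \abs{k(\theta,\lambda)}^2 d\theta}\kl{\int_\Theta \abs{g(\xv,\theta)}^2 d\theta} d\lambda\, d\xv = \norm{k}_{\LtTL}^2 \, \norm{g}_{\LtOT}^2 \,.
\]
Measurability of $(\xv,\lambda) \mapsto (\Kt g)(\xv,\lambda)$ and the admissibility of the interchanges follow from Fubini--Tonelli, since the finiteness of the double integral above (for a.e.\ $\xv$) shows that $(\theta,\lambda) \mapsto k(\theta,\lambda)\, g(\xv,\theta)$ is integrable. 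Because $s \ge 0$, the embedding $E_s$ is bounded, and hence $K = \Kt \circ E_s$ is bounded as a composition of bounded operators, which establishes the first claim.

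For the adjoint, use $K^* = (\Kt E_s)^* = E_s^* \, \Kt^*$, so it remains only to identify $\Kt^*$. Since the SSP templates $S$, and therefore $k$, are real-valued, a direct computation with Fubini's theorem — justified by the $L^2$-bounds just established together with Cauchy--Schwarz — gives, for all $g \in \LtOT$ and $y \in \LtOL$,
\[
    \spr{\Kt g,\, y}_{\LtOL} = \int_\Omega \int_\Theta g(\xv,\theta)\kl{\int_\Lambda k(\theta,\lambda)\, y(\xv,\lambda)\, d\lambda} d\theta\, d\xv = \spr{g,\, \Kt^* y}_{\LtOT} \,,
\]
so that $(\Kt^* y)(\xv,\theta) = \int_\Lambda k(\theta,\lambda)\, y(\xv,\lambda)\, d\lambda$; this function lies in $\LtOT$ by the same Hilbert--Schmidt estimate as above with the roles of $\Theta$ and $\Lambda$ exchanged. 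Composing with $E_s^*$ then produces exactly the stated formula \eqref{eq_K_adj} for $K^*$.

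The computational content is entirely routine: the Hilbert--Schmidt norm estimate and the bookkeeping of the Fubini/Tonelli interchanges. The only point requiring a little care — and thus the "hard part", such as it is — is verifying that the inner integrals define measurable functions belonging to the correct $L^2$ spaces before swapping the order of integration; this is dispatched by first establishing finiteness of the relevant iterated integrals via Tonelli, after which Fubini applies. I do not anticipate any genuine obstacle.
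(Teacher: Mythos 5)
Your proof is correct and follows essentially the same route as the paper: the same Cauchy--Schwarz/Hilbert--Schmidt estimate yielding $\norm{Kf}_{\LtOL}\le\norm{k}_{\LtTL}\,C_s\norm{f}_{\HsOT}$, and the same Fubini computation for the adjoint with $E_s^*$ absorbing the Sobolev duality. Making the factorization $K=\Kt\circ E_s$ explicit is a purely presentational refinement of what the paper already does implicitly when it passes from $\spr{\cdot,E_sf}_{\LtOT}$ to $\spr{E_s^*(\cdot),f}_{\HsOT}$.
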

\begin{proof}
For any $f \in \HsOT$, using the Cauchy-Schwarz inequality we obtain
    \begin{equation*}
    \begin{split}
        &\norm{Kf}_\LtOL^2 = \int_{\Omega}\int_\Lambda \abs{\int_\Theta f(\xv, \theta) k(\theta, \lambda)d \theta }^2\, d\lambda  \, d\xv 
        \\
        & \qquad
        \leq  \int_{\Omega}\int_\Lambda \int_\Theta \abs{f(\xv, \theta)}^2  \,d\theta \int_\Theta \abs{ k(\theta, \lambda) }^2 \,d\theta  \, d\lambda  \, d\xv 
        =
        \norm{k}_\LtTL^2 \norm{f}_\LtOT^2 \,.
    \end{split}
    \end{equation*}
Together with the Sobolev embedding inequality \cite{Adams_Fournier_2003}
    \begin{equation}\label{Sobolev_embedding}
        \norm{f}_\LtOT \leq C_s \norm{f}_\HsOT \,, \qquad \forall \, s \in \R \,,
    \end{equation}
this yields boundedness of $K$. Next, for $f \in \HsOT$ and $y \in \LtOL$ consider  
    \begin{equation*}
    \begin{split}
        & \spr{Kf,y}_{\LtOL} = \int_\Omega \int_\Lambda \kl{\int_\Theta  k(\theta, \lambda) f(\xv, \theta) \,d \theta} y(\xv,\lambda) \, d\lambda \,d\xv
        \\
        & \quad
        = \int_\Omega \int_\Theta  \kl{    \int_\Lambda  k(\theta, \lambda)  y(\xv,\lambda) \, d\lambda} f(\xv, \theta)  \, d\theta \,d\xv 
        \\
        & \quad
        = \spr{ \int_\Lambda  k(\cdot_\theta, \lambda)  y(\cdot_{\xv},\lambda) \, d\lambda  \,, E_s f}_{\LtOT}
         = \spr{ E_s^* \kl{ \int_\Lambda  k(\cdot_\theta, \lambda)  y(\cdot_{\xv},\lambda) \, d\lambda },   f}_{\HsOT} \, , 
    \end{split}
    \end{equation*}
which establishes that the adjoint $K^*$ of $K$ is given as in \eqref{eq_K_adj}.
\end{proof}

In practice, measurement data $y(\xv,\lambda)$ are only available for finitely many wavelengths $\lambda_r$, $r \in \Kl{1\,,\dots\,,R}$. Hence, instead of via \eqref{eq_main_compact} one has to estimate the distribution $f$ from given data $y(\xv,\lambda_r)$ via the system of integral equations
    \begin{equation}\label{main_compact_system}
        \int_\Theta k(\theta,\lambda_r) f(\xv,\theta) \, d \theta \, = \, y(\xv,\lambda_r) \,, \qquad \forall \, r \in \Kl{1\,,\dots\,,R}\,.
    \end{equation}  
This in turn leads us to making the following
\begin{definition}
Let Assumption~\ref{ass_main} hold and let $R \in \N$. Furthermore, let $\lambda_r \in \Lambda$ for all $r \in \Kl{1\,,\dots\,,R}$. Then for any $0 \leq s \in \R$ we define the integral operators
    \begin{equation}\label{def_operator_K_r}
    \begin{split}
        K_r \, : \, \HsOT \to \LtO  \,,
        \qquad
        f \mapsto (K_r f)(\xv) := \int_\Theta  k(\theta, \lambda_r) f(\xv, \theta) \, d\theta \,,
    \end{split}
    \end{equation}
as well as the composite operator
    \begin{equation}\label{def_operator_Kv}
    \begin{split}
        \Kv \, : \, \HsOT \to \LtO^R\,,
        \qquad
        f \mapsto  \Kv f := \kl{K_1 f \,, \dots \,, K_R f } \,.
    \end{split}
    \end{equation}
\end{definition}

It follows with this definition that given a set of measurement data of the form
    \begin{equation}
        \yv = \kl{y_r}_{r=1}^R = \kl{y(x,\lambda_r) }_{r=1}^R \,,
    \end{equation}
the system of integral equations \eqref{main_compact_system} can be written as the linear operator equation    
    \begin{equation}\label{Kv_f=y}
        \Kv f = \yv \,.
    \end{equation}
This allows us to formulate the following mathematical

\begin{problem}\label{prob_Kv}
Let Assumption~\ref{ass_main} hold, let $R \in \N$ and let $\yv = (y_r)_{r=1}^R \in \LtO^R$. Furthermore, let $\lambda_r \in \Lambda$ for all $r \in \Kl{1\,,\dots\,,R}$, let $0\leq s\in \R$, and let $\Kv : \HsOT \to \LtO^R$ be defined as in \eqref{def_operator_Kv}. Then the \emph{galactic archaeology problem with discrete wavelength measurements} consists in finding a function $f \in \HsOT$ which satisfies the operator equation \eqref{Kv_f=y}. 
\end{problem}

A number of basic properties of the operators $\Kv$ and $K_r$ are collected in

\begin{lemma}\label{lemma_properties_A_Ar}
Let Assumption~\ref{ass_main} hold and let $R \in \N$. Furthermore, let $\lambda_r \in \Lambda$ for all $r \in \Kl{1\,,\dots\,,R}$ and let $0\leq s\in \R$. Then the linear operators $K_r: \HsOT \to \LtO$ and $\Kv: \HsOT \to \LtO^M$ defined as in \eqref{def_operator_K_r} and \eqref{def_operator_Kv}, respectively, are bounded. Furthermore, for any $\yv = (y_r)_{r=1}^R \in \LtO^R$ the adjoint operators $K_r^*$ of $K_r$ are given by 
    \begin{equation}\label{eq_Kr_adj}
        (K_r^* y_r)(\xv,\theta) =   E_s^*\kl{ k(\theta, \lambda_r)  y_r(\xv) }  \,,
    \end{equation}
and the adjoint $\Kv^*$ of the operator $\Kv$ is given by
    \begin{equation}\label{eq_Kv_adj}
        (\Kv^*\yv)(\xv,\theta) =  \sum_{r=1}^R (K_r ^* \, y_r)(\xv,\theta) 
        =  E_s^* \kl{ \sum_{r=1}^R k(\theta, \lambda_r)  y_r(\xv) }\,,
    \end{equation}
where as before $E_s : \HsOT \to \LtOT$ denotes the Sobolev embedding operator.
\end{lemma}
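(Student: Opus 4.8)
The plan is to follow the same route as the proof of Lemma~\ref{lemma_properties_K}, treating each $K_r$ as the single-wavelength analogue of $K$ and then assembling the component estimates for $\Kv$. Before starting, one point has to be fixed: for a \emph{fixed} $\lambda_r$ the slice $k_r := k(\cdot\,,\lambda_r)$ must belong to $L^2(\Theta)$, and this does not follow from $k \in \LtTL$ alone, since Fubini only controls the slices for almost every $\lambda$. In our setting this is harmless, because the SSP templates $S(\lambda;z,t)$ — and hence $k$ — are regular (continuous, indeed tabulated) in $\lambda$, so $k_r \in L^2(\Theta)$ for every $r$; I would either note this explicitly or fold it into Assumption~\ref{ass_main}. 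With $k_r \in L^2(\Theta)$ in hand, the rest is a routine adaptation of the arguments already given.

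For boundedness, fix $r$ and $f \in \HsOT$ and apply the Cauchy-Schwarz inequality in the $\theta$-integral exactly as in Lemma~\ref{lemma_properties_K} to obtain
\[
\norm{K_r f}_\LtO^2 \leq \int_\Omega \int_\Theta \abs{f(\xv,\theta)}^2 \,d\theta \int_\Theta \abs{k(\theta,\lambda_r)}^2 \,d\theta \, d\xv = \norm{k_r}_{L^2(\Theta)}^2 \norm{f}_\LtOT^2 \,;
\]
combining this with the Sobolev embedding \eqref{Sobolev_embedding} gives $\norm{K_r f}_\LtO \leq C_s \norm{k_r}_{L^2(\Theta)} \norm{f}_\HsOT$, so each $K_r$ is bounded. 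Since the norm on $\LtO^R$ satisfies $\norm{\Kv f}_{\LtO^R}^2 = \sum_{r=1}^R \norm{K_r f}_\LtO^2$, summing this estimate yields $\norm{\Kv f}_{\LtO^R}^2 \leq C_s^2 \kl{\sum_{r=1}^R \norm{k_r}_{L^2(\Theta)}^2} \norm{f}_\HsOT^2$, a finite constant, hence $\Kv$ is bounded as well.

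For the adjoints, I would first treat $K_r^*$: for $f \in \HsOT$ and $y_r \in \LtO$, expand $\spr{K_r f, y_r}_\LtO$, interchange the $\theta$- and $\xv$-integrations (Fubini, whose hypotheses follow from the Cauchy-Schwarz bound above), and recognise the result as $\spr{k(\cdot_\theta,\lambda_r)\,y_r(\cdot_\xv), E_s f}_\LtOT = \spr{E_s^*\kl{k(\cdot_\theta,\lambda_r)\,y_r(\cdot_\xv)}, f}_\HsOT$, which is precisely \eqref{eq_Kr_adj}. For $\Kv^*$, I use that $\Kv f = (K_1 f, \dots, K_R f)$ and that the inner product on $\LtO^R$ is the sum of the component inner products: for $\yv = (y_r)_{r=1}^R$,
\[
\spr{\Kv f, \yv}_{\LtO^R} = \sum_{r=1}^R \spr{K_r f, y_r}_\LtO = \sum_{r=1}^R \spr{f, K_r^* y_r}_\HsOT = \spr{f, \sum_{r=1}^R K_r^* y_r}_\HsOT \,,
\]
so $\Kv^* \yv = \sum_{r=1}^R K_r^* y_r$, and the closed form on the right-hand side of \eqref{eq_Kv_adj} follows by linearity of $E_s^*$. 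I do not expect a serious obstacle here: the only genuine subtlety is the pointwise-in-$\lambda$ well-definedness of $k_r$ flagged above, which is a modelling matter rather than an analytic one, while everything else is bookkeeping parallel to Lemma~\ref{lemma_properties_K}.
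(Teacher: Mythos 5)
Your proof is correct and follows essentially the same route as the paper's: Cauchy--Schwarz in the $\theta$-integral combined with the Sobolev embedding \eqref{Sobolev_embedding} for boundedness of each $K_r$ (and hence of $\Kv$), Fubini plus the adjoint embedding $E_s^*$ for \eqref{eq_Kr_adj}, and summation of the component inner products on $\LtO^R$ for \eqref{eq_Kv_adj}. Your side remark that $k(\cdot,\lambda_r)\in L^2(\Theta)$ for a \emph{fixed} $\lambda_r$ does not follow from $k\in\LtTL$ alone is a fair observation that the paper leaves implicit, but resolving it as you do (via the regularity of the templates) does not alter the argument.
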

\begin{proof}
Let $f \in \HsOT$ be arbitrary but fixed and consider
    \begin{equation*}
    \begin{split}
        &\norm{K_r f}_\LtO^2 = \int_\Omega \abs{\int_\Theta  k(\theta, \lambda_r) f(\xv, \theta) \, d\theta}^2 \, d\xv 
        \\
        & \qquad
        \leq \int_\Omega \int_\Theta \abs{ k(\theta,\lambda_r)}^2\,d\theta  \int_\Theta \abs{f(\xv,\theta)}^2 \, d\theta\, d\xv 
        =
        \norm{k(\cdot,\lambda_r)}_\LtO^2 \norm{f}_\LtOT^2 \,.
    \end{split}     
    \end{equation*}
Together with \eqref{Sobolev_embedding} this yields the boundedness of the $K_r$ and thus, by its definition \eqref{def_operator_Kv} also the boundedness of $\Kv$. Next, for $f \in \HsOT$ and $y_r \in \LtO$ it follows with Fubini's theorem that
    \begin{equation*}
    \begin{split}
        & \spr{K_r f, y_r}_{\LtO} = \int_\Omega  \kl{\int_\Theta  k(\theta, \lambda_r) f(\xv, \theta) \,d \theta} y_r(\xv) \,d\xv
        \\
        & \qquad
        = \int_\Omega \int_\Theta   k(\theta, \lambda_r)  y_r(\xv) \, f(\xv, \theta)  \, d\theta \,d\xv
        \\
        & \qquad
        = \spr{   k(\cdot_\theta, \lambda_r)  y_r(\cdot_{\xv})  \,, E_s f}_{\LtOT}
        = \spr{ E_s^*\kl{  k(\cdot_\theta, \lambda_r)  y_r(\cdot_{\xv}) } \,, f}_{\HsOT} \,,
    \end{split}
    \end{equation*}
which establishes that the adjoint $K_r^*$ of $K_r$ is given as in \eqref{eq_Kr_adj}. Furthermore, by the definition \eqref{def_operator_Kv} of $\Kv$ it follows that for each $\yv= \kl{y_r}_{r=1}^R \in \LtO^R$ there holds
    \begin{equation*}
        \spr{\Kv f,\yv }_{\LtO^R} = \sum\limits_{r=1}^R \spr{K_r f , y_r}_\LtO 
        = \spr{f, \sum\limits_{r=1}^R K_r^* y_r, }_\HsOT \,,
    \end{equation*}
which shows that the adjoint $\Kv^*$ is given as in \eqref{eq_Kv_adj}, which concludes the proof. 
\end{proof}

Note that for the function $f$ to be physically meaningful, there has to hold
    \begin{equation}\label{constraint_nonnegative}
        f(\xv,v,z,t) = f(\xv,\theta) \geq 0 \,, 
        \qquad
        \forall \, \xv \in \Omega \,, \, \theta = (v,z,t) \in \Theta  \,.
    \end{equation}
This has to be kept in mind when solving the galactic archaeology problem and shall be addressed when considering solution approaches in the subsequent section. 
    
\section{Reconstruction approach - the PNKR method}\label{sect_reconstruction}

In this section, we consider different reconstruction approaches for the galactic archaeology Problem~\ref{prob_Kv}, which amounts to solving the linear operator equation \eqref{Kv_f=y}, i.e.,
    \begin{equation*}
        \Kv f = \yv \,.
    \end{equation*}
In practice, instead of exact data $\yv = (y_r)_{r=1}^R$ one typically only has access to noisy measurement data $\yvd = (\yd_r)_{r=1}^R$, which can be assumed to satisfy the estimate
    \begin{equation*}
        \norm{\yd_r - y_r} \leq \delta_r 
    \end{equation*}   
for some given noise levels $\delta_r$. Defining the total noise level
    \begin{equation*}
        \delta^2 := \sum\limits_{r=1}^R\delta_r^2  \,,
    \end{equation*}
our given data $\yvd$ are thus assumed to satisfy the the estimate
     \begin{equation*}
        \norm{\yvd - \yv} \leq \delta \,.
    \end{equation*}
In general, solving \eqref{Kv_f=y} is an ill-posed problem, with the degree of ill-posedness depending on the value of $s$ in the definition space $\HsOT$ of $\Kv$. Thus, regularization is necessary in order to obtain stable approximations of solutions to \eqref{Kv_f=y}, in particular in the presence of noisy data $\yvd$; see e.g.~\cite{Engl_Hanke_Neubauer_1996,Louis_1989}. Hence, in this section we first review some necessary background on regularization methods, in particular on Kaczmarz methods. These have the benefit of leveraging the system structure of the problem, and form the basis of the PNKR method for its solution which we then introduce below.

\subsection{Variational and iterative regularization methods}

One of the most popular regularization method is Tikhonov regularization \cite{Engl_Hanke_Neubauer_1996,Tikhonov_1963}, which determines an approximation $f_\alpha^\delta$ to the minimum-norm solution $f^\dagger$ by minimizing
    \begin{equation*}
        \mathcal{T}_\alpha^\delta(f) := \norm{\Kv f - \yvd}^2 + \alpha \norm{ f }^2 \,.
    \end{equation*}
The minimizer of this functional can be stated explicitly as
    \begin{equation}\label{eq_fad}
        f_\alpha^\delta = \kl{\Kv^*\Kv + \alpha I}^{-1} \Kv^* \yvd \,, 
    \end{equation}
where $I$ denotes the identity operator. Tikhonov regularization has been used successfully in many applications, in particular since it can be adapted to include many different assumptions on the sought for solution such as regularity or sparsity with respect to a given basis. Alternatively, one can directly use iterative regularization methods for solving \eqref{Kv_f=y}. Popular choices include e.g.\ Landweber iteration, given by
    \begin{equation}\label{Landweber}
        \fkpd = \fkd + \omega \Kv^*(\yvd - \Kv \fkd ) \,,
    \end{equation}
where $\omega$ is a stepsize parameter, or conjugate-gradient for the normal equations (CGNE), which is the method of conjugate gradients (CG) applied to
    \begin{equation*}
         \Kv^*\Kv f = \Kv^* \yvd \,.
    \end{equation*}
In order to be regularization methods, iterative regularization methods usually need to be paired with a suitable stopping rule to terminate the iteration. A very popular choice is the discrepancy principle, which determines the stopping index $k^*$ as
    \begin{equation}\label{discrepancy_principle}
        k^* := \inf\Kl{ k \in \N \, \vert \, \norm{\Kv \fkd - \yvd} \leq \tau \delta  } \,,
    \end{equation}
for some parameter $\tau > 1$. Among iterative methods, CGNE often requires the smallest amount of iterations if the discrepancy principle is used for stopping the iteration \cite{Engl_Hanke_Neubauer_1996}. However, it is not always the best choice in practice: For example, incorporating additional assumptions on the solution such as sparsity constraints is not straightforward, as the method itself is not very flexible. On the other hand, this is particularly easy to do with Landweber iteration and its many different variants, leading e.g. in the case of sparsity constraints to such methods as ISTA. Furthermore, even though in its basic form Landweber iteration is known to be quite slow, many successful attempts have been made to speed it up considerably \cite{Hanke_1991, Neubauer_2000, Ramlau_1999, Nesterov_1983, Neubauer_2017, Scherzer_1996, Attouch_Peypouquet_2016, Hubmer_Ramlau_2017, Hubmer_Ramlau_2018, Kindermann_2021, Jin_2016_01}.

However, both for CGNE and most of the variants of Landweber iteration, the operators $\Kv$ and $\Kv^*$ need to be applied at each iteration step. While typically of no concern in small or medium scale problems, for large scale problems such as our galactic archaeology problem, the numerical effort required to do so can often become problematic. Although in some applications a model reduction or a lower order approximation of the operators $\Kv$ and $\Kv^*$ can be employed as an effective remedy, here we propose to use the underlying system structure of \eqref{Kv_f=y} to our advantage. In particular, we propose to use a specific modification of the Landweber-Kaczmarz approach outlined below.

The classical Landweber-Kaczmarz method for solving \eqref{Kv_f=y} is defined by 
    \begin{equation}\label{LW_KM_classic}
        \fkpd = \fkd + \omega K_\kmod^*(\yd_\kmod - K_\kmod \fkd ) \,,
    \end{equation}
where $\omega$ is again a stepsize-parameter and
    \begin{equation}\label{def_kmod}
        \kmod := (k \operatorname{mod} R) + 1 \quad \in \Kl{1\,,\dots\,,R} \,.
    \end{equation}
Note that in each step of this method only the operators $K_\kmod$ and $K_\kmod^*$, but not the full operators $\Kv$ and $\Kv^*$ need to be applied, which makes each iteration step computationally more efficient than, e.g., classical Landweber iteration \eqref{Kv_f=y}. Concerning the choice of a suitable stopping rule, note that while it is possible to use the discrepancy principle \eqref{discrepancy_principle}, here it also makes sense to check the component-wise discrepancy
    \begin{equation*}
        \norm{K_\kmod \fkd - \yd_\kmod}  \,,
    \end{equation*}
and only update the current iterate if this quantity is larger than $\tau \delta_\kmod$. This can be effected by defining the iteration dependent stepsize 
    \begin{equation}\label{def_okd}
        \okd 
        :=
        \begin{cases}
            \omega  \,,  & \norm{K_\kmod \fkd - \yd_\kmod} > \tau \delta_\kmod \,,
            \\
            0 \,, & \text{else} \,,
        \end{cases}
    \end{equation}
and instead of \eqref{LW_KM_classic} using the adapted iterative procedure
    \begin{equation}\label{LW_KM_cDP}
        \fkpd = \fkd + \okd K_\kmod^*(\yd_\kmod - K_\kmod \fkd ) \,.
    \end{equation}
The convergence properties of this adapted Landweber-Kaczmarz method have been analysed previously both for linear and for nonlinear operators \cite{Haltmeier_Leitao_Scherzer_2007,DeCezaro_Haltmeier_Leitao_Scherzer_2008,Kindermann_Leitao_2014,Engl_Hanke_Neubauer_1996,Kaltenbacher_Neubauer_Scherzer_2008}. Note that the constant stepsize $\omega$ in \eqref{def_okd} can also be replaced by an iteration dependent stepsize $\akd$ such as the steepest descent or the minimal error stepsize \cite{Kaltenbacher_Neubauer_Scherzer_2008}.

We propose to adapt the Landweber-Kaczmarz method \eqref{LW_KM_cDP} for solving the extragalactic archaeology Problem~\ref{prob_Kv} by including two modifications of the method concerning efficiency and physical prior information as outlined below. Together with a specific discretization this will then yield the PNKR method. In order to define this discretization, we first need to consider a general discretization of the Landweber-Kaczmarz method.

\subsection{The discrete Landweber-Kaczmarz method}

There exist a number of ways to derive discrete versions of (iterative) regularization methods such as the Landweber-Kaczmarz method; see e.g.\ \cite{Engl_Hanke_Neubauer_1996}. These commonly involve projections onto finite dimensional subspaces of the definition and/or image space of the underlying operator. Here, we consider the finite dimensional subspaces
    \begin{equation}\label{def_XN_YM}
    \begin{split}
        \XM &:= \operatorname{span}\Kl{\phi_1(\xv,\theta) \,, \dots \,, \phi_M(\xv,\theta)} \subset \HsOT \,,
        \\
        \YN &:= \operatorname{span}\Kl{\psi_1(\xv) \,, \dots \,, \psi_N(\xv)}    \subset \LtO \,,
    \end{split}
    \end{equation}
where the functions $\phi_m(\xv,\theta)$ and $\psi_m(\xv)$ denote linearly independent basis functions to be specified below. Next, denote by $P$ and $Q$ the orthogonal projectors onto $\XM$ and $\YN$, respectively, and define the operators $\Kt_r := Q K_r P$. With this, we can introduce the following discrete version of the Landweber-Kaczmarz method \eqref{LW_KM_cDP}:
    \begin{equation}\label{LW_KM_cDP_discrete}
        \fkpdt = \fkdt + \okdt \Kt_\kmod^*(Q \yd_\kmod - \Kt_\kmod \fkdt )\,,
    \end{equation}
where the discrete version $\okdt$ of the stepsize $\okd$ is given by
    \begin{equation}\label{stepsize_general_discrete}
        \okdt 
        :=
        \begin{cases}
            \omega  \,,  & \norm{\Kt_\kmod \fkd - Q \yd_\kmod} > \tau \delta_\kmod \,,
            \\
            0 \,, & \text{else} \,.
        \end{cases}
    \end{equation}    
This iteration \eqref{LW_KM_cDP_discrete} can be written in a matrix-vector form, for which we first need 

\begin{definition}
Let $s \in \R$, let $\Theta \subset \R^3$ be defined as in \eqref{def_Theta}, and let $\Omega \subset \R^2$ be bounded and Jordan measurable. Furthermore, let $\Kl{\phi_m}_{m=1}^M \subset \HsOT$ and $\Kl{\psi_n}_{n=1}^N \subset \LtO$ denote two sets of linearly independent functions and let the finite dimensional spaces $\XM$ and $\YN$ be defined as in \eqref{def_XN_YM}. Then we define the matrices
    \begin{equation}\label{def_matrices_M_N}
        \Mv := \kl{ \spr{\phi_i,\phi_j}_\XM }_{i,j=1}^M  \,,
        \qquad
        \text{and}
        \qquad
        \Nv := \kl{ \spr{\psi_i,\psi_j}_\YN }_{i,j=1}^N \,.
    \end{equation}
Furthermore, for all $r = 1, \dots, R$, for $\yd_r \in Y$ and with $K_r$ as in \eqref{def_operator_K_r} we define  
    \begin{equation}\label{def_Hr_wrd}
        \Hv_r := \kl{ \spr{ \psi_j, K_r \phi_i  }_\YN }_{j,i=1}^{N,M} \,,
        \qquad
        \text{and}
        \qquad
        \wv_r^\delta := \kl{ \spr{\yd_r,\psi_j}_\YN }_{j=1}^N \,.
    \end{equation}
\end{definition}
    
The matrices $\Mv$ and $\Nv$ can be used to characterize the projections onto the subspaces $X_M$ and $Y_N$, respectively. For example, for any $\Tilde{w} \in Y$ there holds
    \begin{equation}\label{connection_vector_function}
        Q  \Tilde{w} = \sum\limits_{j=1}^N \Tilde{w}_j    \psi_j 
        \qquad
        \Longleftrightarrow
        \qquad
        \Nv \kl{ \tilde{w}_j  }_{j=1}^N = \kl{ \spr{\tilde{w} , \psi_j}}_{j=1}^N \,,
    \end{equation}
and similarly for the projector $P$ onto $X_M$. Using this, we can now follow the standard approach for discretizing iterative regularization methods, see e.g.~\cite{Engl_Hanke_Neubauer_1996}, and derive

\begin{proposition}
Let $s \in \R$, let $\Theta \subset \R^3$ be defined as in \eqref{def_Theta}, and $\Omega \subset \R^2$ be bounded and Jordan measurable. Furthermore, let $\Kl{\phi_m}_{m=1}^M \subset \HsOT$ and $\Kl{\psi_n}_{n=1}^N \subset \LtO$ denote two sets of linearly independent functions and let the finite dimensional spaces $\XM$ and $\YN$ be defined as in \eqref{def_XN_YM}. Furthermore, for all $r=1,\dots,R$ let $K_r$ be as in \eqref{def_operator_K_r}, $\yd_r \in Y$ and let $\Mv$, $\Nv$, $\Hv_r$, and $\wv_r^\delta$ be defined as in \eqref{def_matrices_M_N} and \eqref{def_Hr_wrd}, respectively. Then for the iterates $\fkdt$ defined in \eqref{LW_KM_cDP_discrete} there holds
    \begin{equation}\label{eq_expansion}
        \fkdt = \sum\limits_{i=1}^M u_{i,k}^\delta \phi_i \,,
    \end{equation}
where the coefficients $\uv_k^\delta = \kl{ u_{i,k}^\delta }_{i=1}^M$ are given by the  iteration
    \begin{equation}\label{LW_KM_stepsize_discrete}
        \uvkpd = \uvkd + \okdt \Mv^{-1} \Hv_\kmod^T \Nv^{-1} \kl{ \wv_\kmod^\delta -  \Hv_\kmod  \uvkd } \,,
    \end{equation}
which is also called the \emph{discrete Landweber-Kaczmarz method}.
\end{proposition}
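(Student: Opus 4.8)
The plan is to argue by induction on $k$, establishing simultaneously that each iterate $\fkdt$ lies in $\XM$ (so that the expansion \eqref{eq_expansion} is meaningful) and that its coefficient vector satisfies \eqref{LW_KM_stepsize_discrete}. Two preliminaries come first. Since $\Kl{\phi_m}_{m=1}^M$ and $\Kl{\psi_n}_{n=1}^N$ are linearly independent, the Gram matrices $\Mv$ and $\Nv$ are symmetric positive definite, hence invertible, so the right-hand side of \eqref{LW_KM_stepsize_discrete} is well defined; moreover, the characterization \eqref{connection_vector_function} lets us pass freely between a function in $\XM$ (resp.\ $\YN$) and its coefficient vector. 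Second, because $P$ and $Q$ are orthogonal projectors (self-adjoint and idempotent), the adjoint of $\Kt_r = QK_rP$ is $\Kt_r^* = PK_r^*Q$, whose range lies in $\rge{P} = \XM$. Assuming, as is standard, that the initial guess $\tilde f_0^\delta$ is chosen in $\XM$ (e.g.\ $\tilde f_0^\delta = 0$), an immediate induction on \eqref{LW_KM_cDP_discrete} then gives $\fkdt \in \XM$ for every $k$, which is \eqref{eq_expansion}.

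For the induction step I would translate each piece of \eqref{LW_KM_cDP_discrete} into coordinates using \eqref{connection_vector_function}. (i) Since $\fkdt \in \XM$ we have $P\fkdt = \fkdt$, so $\Kt_\kmod\fkdt = QK_\kmod\fkdt$, and this equals $\sum_j c_j\psi_j$ with $\Nv\boldsymbol{c} = \kl{\spr{K_\kmod\fkdt,\psi_j}_\LtO}_{j=1}^N = \Hv_\kmod\uvkd$, by the definition of $\Hv_\kmod$ and $\fkdt = \sum_i u_{i,k}^\delta\phi_i$. (ii) Likewise $Q\yd_\kmod = \sum_j d_j\psi_j$ with $\Nv\boldsymbol{d} = \wv_\kmod^\delta$, so the residual $g_k := Q\yd_\kmod - \Kt_\kmod\fkdt \in \YN$ has coefficient vector $\Nv^{-1}\kl{\wv_\kmod^\delta - \Hv_\kmod\uvkd}$ in the basis $\Kl{\psi_j}$. (iii) Applying $\Kt_\kmod^* = PK_\kmod^*Q$ and using $Qg_k = g_k$ gives $\Kt_\kmod^*g_k = PK_\kmod^*g_k = \sum_i e_i\phi_i$ with
\begin{align*}
\Mv\boldsymbol{e} &= \kl{\spr{K_\kmod^*g_k,\phi_i}_\HsOT}_{i=1}^M = \kl{\spr{g_k,K_\kmod\phi_i}_\LtO}_{i=1}^M \\
&= \Hv_\kmod^T\,\Nv^{-1}\kl{\wv_\kmod^\delta - \Hv_\kmod\uvkd}\,,
\end{align*}
where the middle equality is exactly the adjoint relation for $K_\kmod$ from Lemma~\ref{lemma_properties_A_Ar} (so that the Sobolev embedding $E_s$ never has to be unpacked explicitly), and the last equality expands $g_k$ in the $\psi_j$ and again invokes the definition of $\Hv_\kmod$. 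Substituting $\boldsymbol{e} = \Mv^{-1}\Hv_\kmod^T\Nv^{-1}\kl{\wv_\kmod^\delta - \Hv_\kmod\uvkd}$ into \eqref{LW_KM_cDP_discrete} and reading off coefficients yields $\uvkpd = \uvkd + \okdt\boldsymbol{e}$, which is \eqref{LW_KM_stepsize_discrete}; in particular $\fkpdt \in \XM$, closing the induction.

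For completeness I would also note that the switching quantity in \eqref{stepsize_general_discrete} is computable in the same representation, $\norm{\Kt_\kmod\fkdt - Q\yd_\kmod}_\LtO^2 = \boldsymbol{g}^T\Nv\boldsymbol{g}$ with $\boldsymbol{g} := \Nv^{-1}\kl{\Hv_\kmod\uvkd - \wv_\kmod^\delta}$, so the whole discrete iteration depends only on the data $\Mv,\Nv,\Hv_r,\wv_r^\delta$. The argument is essentially careful bookkeeping; the only point demanding attention is keeping the two inner products apart --- the $\HsOT$ product on the $\phi$-side, encoded by $\Mv$, versus the $\LtO$ product on the $\psi$-side, encoded by $\Nv$ --- and checking that it is the transpose $\Hv_\kmod^T$, not $\Hv_\kmod$, that emerges after passing through the adjoint. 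I do not anticipate any substantive obstacle beyond this.
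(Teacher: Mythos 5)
Your proposal is correct and follows essentially the same route as the paper: both arguments translate the projected iteration \eqref{LW_KM_cDP_discrete} into coordinates via \eqref{connection_vector_function}, use the adjoint relation $\spr{K_\kmod^* \psi_j,\phi_i}_\XM = \spr{\psi_j, K_\kmod\phi_i}_\LtO$ to produce $\Hv_\kmod^T$, and read off \eqref{LW_KM_stepsize_discrete} after applying $\Mv^{-1}$. Your additions (explicit induction with $\tilde f_0^\delta\in\XM$, invertibility of the Gram matrices, and $\Kt_r^*=PK_r^*Q$) are sound refinements of details the paper leaves implicit, not a different argument.
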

\begin{proof}
Since the functions $\phi_m$ form a basis of $\XM$, the iteration \eqref{LW_KM_cDP_discrete} is equivalent to
    \begin{equation}\label{eq_LW_tested}
    \begin{split}
        \spr{\fkpdt, \phi_m}_\XM = \spr{\fkdt, \phi_m}_\XM + \okdt \spr{\Kt_\kmod^*(Q \yd_\kmod - \Kt_\kmod \fkdt ), \phi_m}_\XM\,,
        \\ \qquad
        \forall \, m \in \Kl{ 1 \,, \dots \,, M} \,.
    \end{split}
    \end{equation}
Now since by definition $\fkdt \in X_M$ for any $k\in\N$, it can be decomposed as in \eqref{eq_expansion} for some coefficients $\uv_k^\delta = \kl{ u_{i,k}^\delta }_{i=1}^M$. Inserting \eqref{eq_expansion} into \eqref{eq_LW_tested} we thus obtain
    \begin{equation*}
    \begin{split}
        &\spr{\sum\limits_{i=1}^M \uikpd \phi_i, \phi_m}_\XM 
        = 
        \spr{\sum\limits_{i=1}^M \uikd \phi_i, \phi_m}_\XM 
        \\
        & \qquad 
        + 
        \okdt \spr{\Kt_\kmod^*\kl{Q \yd_\kmod
        - 
        \Kt_\kmod \kl{ \sum\limits_{i=1}^M \uikd \phi_i}  }, \phi_m}_\XM\,,
        \qquad
        \forall \, m \in \Kl{ 1 \,, \dots \,, M} \,.
    \end{split}
    \end{equation*}
Using the linearity of the inner product and the fact that $\Kt_r = Q K_r P$ we find that
    \begin{equation}\label{eq_LK_tested_basis}
    \begin{split}
        &\sum\limits_{i=1}^M \uikpd \spr{ \phi_i, \phi_m}_\XM
        = 
        \sum\limits_{i=1}^M \uikd  \spr{\phi_i, \phi_m}_\XM 
        \\
        & \qquad
        + 
        \okdt \spr{ K_\kmod^* Q\kl{ \yd_\kmod
        -  \sum\limits_{i=1}^M \uikd K_\kmod \phi_i}, \phi_m}_\XM \,,
        \qquad
        \forall \, m \in \Kl{ 1 \,, \dots \,, M} \,.
    \end{split}
    \end{equation}
Next, we use \eqref{connection_vector_function} for the choice $\tilde{w} := \yd_\kmod$ and obtain that
    \begin{equation}\label{eq_QN_ykd}
    \begin{split}
        & Q \yd_\kmod 
        =
        \sum\limits_{j=1}^N  \kl{ \Nv^{-1} \kl{ \spr{\yd_\kmod,\psi_n}  }_{n=1}^N   }_j  \psi_j 
        = 
        \sum\limits_{j=1}^N  \kl{ \Nv^{-1} \wv_\kmod^\delta }_j  \psi_j \,.
    \end{split}
    \end{equation}
Similarly, we find that
    \begin{equation*}
    \begin{split}
        Q  \kl{\sum\limits_{i=1}^M \uikd K_\kmod \phi_i }
        =
        \sum\limits_{j=1}^N  \kl{ \Nv^{-1} \kl{ \sum\limits_{i=1}^M \uikd  \spr{ K_\kmod \phi_i   ,\psi_n}  }_{n=1}^N   }_j  \psi_j 
        =
        \sum\limits_{j=1}^N  \kl{ \Nv^{-1}  \Hv_\kmod  \uikd  }_j  \psi_j \,,
    \end{split}
    \end{equation*}
such that together with \eqref{eq_QN_ykd} we get
    \begin{equation*}
        Q \kl{ \yd_\kmod - \sum\limits_{i=1}^M \uikd K_\kmod \phi_i }
        = \sum\limits_{j=1}^N  \kl{ \Nv^{-1} \kl{ \wv_\kmod^\delta -  \Hv_\kmod  \uikd  } }_j \psi_j \,.
    \end{equation*}
Inserting this expression into \eqref{eq_LK_tested_basis} we obtain
    \begin{equation*}
    \begin{split}
        &\sum\limits_{i=1}^M \uikpd \spr{ \phi_i, \phi_m}_\XM 
        = 
        \sum\limits_{i=1}^M \uikd  \spr{\phi_i, \phi_m}_\XM 
        \\
        & \qquad
        + 
        \okdt \sum\limits_{j=1}^N  \kl{ \Nv^{-1} \kl{ \wv_\kmod^\delta -  \Hv_\kmod  \uikd  } }_j \spr{   K_\kmod^*  \psi_j, \phi_m}_\XM
        \qquad
        \forall \, m \in \Kl{ 1 \,, \dots \,, M} \,,
    \end{split}
    \end{equation*}
and thus together with \eqref{def_matrices_M_N} find that
    \begin{equation*}
        \Mv \uvkpd = \Mv \uvkd + \okdt \Hv_\kmod^T \Nv^{-1} \kl{ \wv_\kmod^\delta -  \Hv_\kmod  \uvkd} \,.
    \end{equation*}
Finally, after applying the inverse of $\Mv$ we obtain \eqref{LW_KM_stepsize_discrete}, which yields the assertion.
\end{proof}

Concerning the implementation of $\okdt$ defined in \eqref{stepsize_general_discrete}, note that there holds
    \begin{equation*}
        \norm{\Kt_\kmod \fkd - Q \yd_\kmod}^2 
        =
        \kl{ \Nv^{-1} \kl{ \wv_\kmod^\delta -  \Hv_\kmod  \uvkd }}^T \Nv \kl{ \Nv^{-1} \kl{ \wv_\kmod^\delta -  \Hv_\kmod  \uvkd } } \,,
    \end{equation*}
completing the matrix-vector form of the discrete Landweber-Kaczmarz method \eqref{LW_KM_stepsize_discrete}.



\subsection{Modifications of the Landweber-Kaczmarz method}\label{modifications}

The Landweber-Kaczmarz method in both its discrete and continuous forms allows for a number of useful modifications in terms of efficiency and incorporation of physical prior information. In this section, we discuss four modifications which we then combine together with a specific discretization into the PNKR method introduced below.

The first modification concerns the inclusion of the non-negativity constraint \eqref{constraint_nonnegative}, which has not yet entered into the (discrete) Landweber-Kaczmarz method. A common way to incorporate it is via the definition of an (orthogonal) projection/cutoff operator
    \begin{equation*}
    \begin{split}
        \mathcal{T} \, : \, \HsOT &\to \Kl{ f \in \HsOT \, \vert \, f \geq 0 } \,, 
        \\
        (\mathcal{T}f)(x) &:= 
        \begin{cases}
           f(x) \,, & f(x) \geq 0 \,,
           \\
           0 \,, & \text{else} \,,
        \end{cases}
    \end{split}
    \end{equation*}
which is used to adapt iterative methods by projecting the update after each step \cite{Engl_Hanke_Neubauer_1996}. For example, for the classical Landweber-Kaczmarz method \eqref{LW_KM_classic} this adaptation reads
    \begin{equation}\label{LW_KM_classic_nonnegative}
        \fkpd = \mathcal{T}\kl{\fkd + \omega K_\kmod^*(\yd_\kmod - K_\kmod \fkd ) }\,,
    \end{equation}
and analogously for most other iterative method. For the discrete Landweber-Kaczmarz method \eqref{LW_KM_stepsize_discrete} this approach can be effectively realized as follows: First, we define
    \begin{equation*}
         h \, : \R \to \R_0^+\,, 
         \qquad
         x \mapsto 
         \begin{cases}
            x \,, &  x \geq 0 \,,
            \\
            0 \,, & \text{else} \,,
         \end{cases}
    \end{equation*}
and use this function to define the thresholding operator
    \begin{equation}\label{def_op_T}
        T \, : \, \R^M \to \R^M \,, 
        \qquad
        \uv = \Kl{u_i}_{i=1}^M \mapsto T \uv := \Kl{h(u_i) }_{i=1}^M \,.
    \end{equation}
Using this, the discrete Landweber-Kaczmarz method \eqref{LW_KM_cDP_discrete} can be adapted to
    \begin{equation*}\label{LW_KM_cDP_discrete_T}
        \uvkpd = T\kl{\uvkd + \okdt \Mv^{-1} \Hv_\kmod^T \Nv^{-1} \kl{ \wv_\kmod^\delta -  \Hv_\kmod  \uvkd } } \,,
    \end{equation*}
which results in the obtained reconstruction being non-negative.

The second modification concerns the numerical efficiency of the method. While the Landweber-Kaczmarz method already leverages the system structure of the problem, it may still require too many iterations to be feasible in practice. For our problem, this is particularly true when the total number $R$ of considered wavelengths $\lambda_r$ is large. Hence, we propose to apply Nesterov acceleration \cite{Nesterov_1983} to method \eqref{LW_KM_cDP_discrete_T}, which yields
    \begin{equation}\label{nesterov}
    \begin{split}
        \zv_k^\delta &= \uvkd + \frac{k_R-1}{k_R+2}(\uvkd-\uv_{k-1}^\delta) \,,
        \qquad
        k_R = \operatorname{ceil}(k/R)\,,
        \\ 
        \uvkpd &=  T\kl{ \zv_k^\delta  + \okdt \Mv^{-1} \Hv_\kmod^T \Nv^{-1} \kl{ \wv_\kmod^\delta -  \Hv_\kmod  \zv_k^\delta  } }\,.
    \end{split}
    \end{equation} 
Theoretical and numerical properties of Nesterov(-type) acceleration applied to (projected) Landweber and Landweber-Kaczmarz methods for solving inverse problems, in particular convergence and convergence rates results, were studied for example in \cite{Hubmer_Ramlau_2017,Hubmer_Ramlau_2018,Jin_2016_01,Kindermann_2021,Nesterov_1983,Attouch_Peypouquet_2016,Neubauer_2017,Beck_Teboulle_2009,Long_Han_Tong_2019}. We also find that it significantly reduces the required number of iterations in practice for our extragalactic archaeology problem.

The third modification concerns the choice of inner product on the Sobolev space $\HsOT$, which canonically is defined using multiindices $\alpha = (\alpha_1\,, \dots \,, \alpha_5)$ by
    \begin{equation*}
        \spr{f,g}_\HsOT = \sum\limits_{0\leq \abs{\alpha} \leq s} \spr{ \partial^\alpha f, \partial^\alpha g}_{\LtOT} \,.
    \end{equation*}
However, for each sequence $\bv = \Kl{\beta_\alpha}_{0 \leq \abs{\alpha} \leq s}$ of constants $\beta_\alpha > 0$ the inner product
    \begin{equation}\label{beta_vector}
        \spr{f,g}_{s,\bv} := \sum\limits_{0\leq \abs{\alpha} \leq s} \beta_\alpha \spr{ \partial^\alpha f, \partial^\alpha g}_{\LtOT} \,,
    \end{equation}
induces an equivalent norm on $\HsOT$. By tuning the parameters $\beta_\alpha$ one can thus place an emphasis on different derivatives in the inner product. This is useful if one wants to achieve a balance between derivatives of different orders or put an emphasis on smoothness in certain components. For example, in the case $s=1$ we will later use
    \begin{equation} \label{beta}
        \spr{f,g}_{\s,\bv} 
        := 
        \spr{f, g}_{\LtOT} 
        +
        \beta \sum\limits_{\abs{\alpha} = 1} \spr{\partial^\alpha f, \partial^\alpha g}_{\LtOT} \,,
    \end{equation}
and tune the constant $\beta$ to balance the $\LtOT$ component and the first-order derivatives. If one uses an inner product $\spr{\cdot,\cdot}_\beta$ then one also has to use it on the subspace $X_M \subset \HsOT$. This influences the matrix $\Mv$ defined in \eqref{def_matrices_M_N}, which now reads
    \begin{equation*}
        \Mv := \kl{ \spr{\phi_i,\phi_j}_{s,\bv} }_{i,j=1}^M  \,,
    \end{equation*}
and is the only point of influence on the discrete Landweber-Kaczmarz method. 
 
The final modification concerns the order of the operator equations $K_r f = y_r$. The Landweber-Kaczmarz method cycles through these equations in a loop induced by the index $\kmod$ defined in \eqref{def_kmod}. However, this is not strictly necessary, and it has been found that changing the order of equations in either a systematic or even a random fashion can lead to considerable improvements in terms of efficiency and reconstruction quality.

\subsection{The PNKR method}

In this section, we combine our previous considerations with a suitable choice of basis functions $\phi_m$ and $\psi_n$ to arrive at the PNKR method. For this, note first that the main computational effort required for applying \eqref{nesterov} stems from the following sources: 
\begin{enumerate}
    \item In each iteration, the matrices $\Hv_\kmod$ and $\Hv_\kmod^T$ need to be applied.
    \item In each iteration, two systems of equations with $\Mv$ and $\Nv$ need to be solved.
\end{enumerate}
In order to keep the computational costs of these steps as low as possible, the matrices $\Mv$, $\Nv$, and $\Hv_r$ should be as sparse and structured as possible. In particular for the matrices $\Mv$ and $\Nv$, this essentially depends on the choice of the functions $\phi_m$ and $\psi_n$.

Consider for example the typical case that the domain $\Omega$ is rectangular. Then a possible choice for the functions $\psi_n$ are the piecewise constant functions on a uniform rectangular subdivision of $\Omega$. This choice has the advantage that the matrix $\Nv$ turns into a scaled identity matrix, which is trivial to store, apply, and invert. Similarly, given a uniform rectangular subdivision of $\Omega \times \Theta$, the functions $\phi_m$ can e.g.\ be chosen as piecewise polynomial functions satisfying the smoothness requirement $\phi_m \in \HsOT$. This choice results in a highly structured matrix $\Mv$ that can be efficiently handled numerically, e.g.\ using methods from the theory of finite elements \cite{Braess_2007,Necas_2011,Jung_Langer_2012}.

Another possibility for choosing the functions $\psi_m$ and $\phi_n$ is suggested by the structure of the problem itself: Start by taking two linearly independent sets of functions $\Kl{\psi_n}_{n=1}^N \subset H^s(\Omega)$ and $\Kl{\varphi_l}_{l=1}^L \subset H^s(\Theta)$ and define the functions
    \begin{equation}\label{def_phi_m_product}
        \phi_m(\xv,\theta) := \psi_{n(m)}(\xv)\varphi_{l(m)}(\theta) \,,
        \qquad
        \forall \, m \in \Kl{1\,,\dots\,,M} \,, 
    \end{equation}
where $M:= N\cdot L$, $n(m) := \operatorname{floor}\kl{(m-1)/L} + 1$, and $l(m) := ((m-1) \mod L) + 1$. The resulting set of linearly independent functions $\Kl{\phi_m}_{m=1}^M \subset \HsOT$ can then be used for defining the finite dimensional subspace $\XM$ as in \eqref{def_XN_YM}. This has some beneficial consequences for the structure of the matrices $\Mv$, $\Nv$, and $\Hv_r$, as we see in

\begin{proposition}
Let $s \in \R$, $\Theta \subset \R^3$ be defined as in \eqref{def_Theta}, and $\Omega \subset \R^2$ be bounded and Jordan measurable. Furthermore, let $\Kl{\psi_n}_{n=1}^N \subset H^s(\Omega)$ and $\Kl{\varphi_l}_{l=1}^L \subset H^s(\Theta)$ denote two sets of linearly independent functions and let $\Kl{\phi_m}_{m=1}^M \subset \HsOT$ be defined as in \eqref{def_phi_m_product}. Moreover, let the finite dimensional spaces $\XM$ and $\YN$ be defined as in \eqref{def_XN_YM}, and $K_r$ be defined as in \eqref{def_operator_K_r} for all $r=1,\dots,R$. Then for the matrices $\Mv$, $\Nv$, and $\Hv_r$ defined as in \eqref{def_matrices_M_N} and \eqref{def_Hr_wrd} there holds
\begin{equation}\label{def_matrices_M_N_Hr_alternative}
    \begin{split}
        &\Mv = \kl{ \spr{\psi_{n(i)},\psi_{n(j)}}_{H^s(\Omega)}
         \spr{\varphi_{l(i)},\varphi_{l(j)}}_{H^s(\Theta)} }_{i,j=1}^M \,,
         \qquad
        \Nv = \kl{ \spr{\psi_i,\psi_j}_\LtO }_{i,j=1}^N \,,
        \\
        &\Hv_r = \kl{\spr{ \psi_j , \psi_{n(i)}}_\LtO
        \kl{\int_\Theta  k(\theta,\lambda_r)  \varphi_{l(i)}(\theta) \, d\theta }  }_{i,j=1}^{M,N} \,.
    \end{split}
\end{equation}
\end{proposition}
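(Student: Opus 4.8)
The plan is to substitute the product ansatz \eqref{def_phi_m_product} into the definitions \eqref{def_matrices_M_N} and \eqref{def_Hr_wrd} and to exploit two structural facts: each operator $K_r$ acts only in the variable $\theta$, and the inner products on $\HsOT$ and $\LtO$ split over the two variable groups $\xv$ and $\theta$. I would treat $\Nv$, $\Hv_r$, and $\Mv$ in turn.

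For $\Nv$ there is nothing to prove: since $\Kl{\psi_n}_{n=1}^N \subset H^s(\Omega) \subset \LtO$ and $\YN$ carries the inner product inherited from $\LtO$, definition \eqref{def_matrices_M_N} already reads $\Nv = \kl{\spr{\psi_i,\psi_j}_\LtO}_{i,j=1}^N$. For $\Hv_r$ I would compute the action of $K_r$ on a product basis function: by \eqref{def_operator_K_r} and \eqref{def_phi_m_product}, and since $\psi_{n(i)}(\xv)$ does not depend on $\theta$,
\[
    (K_r\phi_i)(\xv) \,=\, \int_\Theta k(\theta,\lambda_r)\,\psi_{n(i)}(\xv)\,\varphi_{l(i)}(\theta)\,d\theta \,=\, \psi_{n(i)}(\xv)\,\kl{\int_\Theta k(\theta,\lambda_r)\,\varphi_{l(i)}(\theta)\,d\theta}\,,
\]
where the bracketed factor is a finite scalar by Cauchy--Schwarz and $\varphi_{l(i)}\in L^2(\Theta)$ (cf.\ the estimate in the proof of Lemma~\ref{lemma_properties_A_Ar}). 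Pairing with $\psi_j$ in $\LtO$ and pulling this $\xv$-independent scalar out of the integral over $\Omega$ then gives $\spr{\psi_j,K_r\phi_i}_\YN = \spr{\psi_j,\psi_{n(i)}}_\LtO\kl{\int_\Theta k(\theta,\lambda_r)\varphi_{l(i)}(\theta)\,d\theta}$, which is exactly the claimed entry of $\Hv_r$.

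The substantive point is $\Mv$, and it rests on the claim that for elementary tensors $\spr{\psi_n\otimes\varphi_l,\,\psi_{n'}\otimes\varphi_{l'}}_\HsOT = \spr{\psi_n,\psi_{n'}}_{H^s(\Omega)}\spr{\varphi_l,\varphi_{l'}}_{H^s(\Theta)}$; granting this and inserting $\phi_i=\psi_{n(i)}\otimes\varphi_{l(i)}$, $\phi_j=\psi_{n(j)}\otimes\varphi_{l(j)}$ gives the stated form of $\Mv$, the rest being bookkeeping with the index maps $n(\cdot)$ and $l(\cdot)$. To prove the claim I would split a multiindex $\alpha$ into its $\xv$- and $\theta$-components, $\alpha=(\alpha_\xv,\alpha_\theta)$, use that the weak derivative of a product factors, $\partial^\alpha(\psi_n\otimes\varphi_l) = (\partial^{\alpha_\xv}\psi_n)\otimes(\partial^{\alpha_\theta}\varphi_l)$ (checked against test functions of product form, which span a dense subspace of $C_c^\infty(\Omega\times\Theta)$), whence $\spr{\partial^\alpha(\psi_n\otimes\varphi_l),\partial^\alpha(\psi_{n'}\otimes\varphi_{l'})}_\LtOT = \spr{\partial^{\alpha_\xv}\psi_n,\partial^{\alpha_\xv}\psi_{n'}}_\LtO\spr{\partial^{\alpha_\theta}\varphi_l,\partial^{\alpha_\theta}\varphi_{l'}}_{L^2(\Theta)}$, and then sum over the relevant $\alpha$. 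This last summation is where the main obstacle lies: the factorization into a product of two separate $H^s$ inner products requires summing over \emph{all} pairs with $|\alpha_\xv|\le s$ and $|\alpha_\theta|\le s$, whereas the canonical $H^s(\Omega\times\Theta)$ inner product restricts to $|\alpha_\xv|+|\alpha_\theta|\le s$ and thus omits the mixed high-order cross terms. I would therefore declare at the outset that $\HsOT$ is given the Hilbert tensor-product inner product of $H^s(\Omega)$ and $H^s(\Theta)$ --- a weighting of the type permitted by \eqref{beta_vector} (multiplicative weights, summation over $|\alpha_\xv|\le s$ and $|\alpha_\theta|\le s$), and in any event immaterial up to norm equivalence on the finite-dimensional space $\XM$ on which the iteration actually runs. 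With this convention fixed, the factorization, and hence the formula for $\Mv$, is immediate.
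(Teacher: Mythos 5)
Your proposal is correct and follows essentially the same route as the paper, with two points worth noting. For $\Hv_r$ you compute $K_r\phi_i$ directly and pull the $\theta$-integral out as an $\xv$-independent scalar, whereas the paper first passes to the adjoint via $\spr{\psi_j,K_r\phi_i}_\LtO = \spr{K_r^*\psi_j,\phi_i}_\HsOT$ and the formula \eqref{eq_Kr_adj}, then unwinds the embedding to land in $\LtOT$; both routes arrive at \eqref{helper_2} and yours is the more economical of the two, needing only \eqref{def_operator_K_r} rather than Lemma~\ref{lemma_properties_A_Ar}. For $\Mv$, the paper simply asserts the factorization $\spr{\psi_{n(i)}\varphi_{l(i)},\psi_{n(j)}\varphi_{l(j)}}_\HsOT = \spr{\psi_{n(i)},\psi_{n(j)}}_{H^s(\Omega)}\spr{\varphi_{l(i)},\varphi_{l(j)}}_{H^s(\Theta)}$ without comment, and your scrutiny of that step is warranted: with the canonical inner product on $\HsOT$ the summation runs over $\abs{\alpha_\xv}+\abs{\alpha_\theta}\le s$, while the product of the two $H^s$ inner products sums over the rectangle $\abs{\alpha_\xv}\le s$, $\abs{\alpha_\theta}\le s$, so for $s\ge 1$ the two sides differ by the mixed high-order cross terms and the identity as literally written holds only under the tensor-product (equivalently, suitably weighted as in \eqref{beta_vector}) inner-product convention. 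Your explicit declaration of that convention, together with the remark that it is immaterial up to norm equivalence on the finite-dimensional space $\XM$, closes a gap that the paper's own proof leaves implicit; it is consistent with the paper's later use of weighted inner products in Section~\ref{modifications} and with the Kronecker factorization $\Mv=\Psi\otimes\Phi$ actually used in the implementation, which presupposes exactly this product structure.
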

\begin{proof}
First of all, due to the definition \eqref{def_phi_m_product} of the functions $\phi_m$ there holds
    \begin{equation*}
    \begin{split}
         \spr{\phi_i,\phi_j}_\HsOT 
         &= 
         \spr{\psi_{n(i)}\varphi_{l(i)},\psi_{n(j)}\varphi_{l(j)}}_\HsOT
         \\
         &= 
         \spr{\psi_{n(i)},\psi_{n(j)}}_{H^s(\Omega)}
         \spr{\varphi_{l(i)},\varphi_{l(j)}}_{H^s(\Theta)} \,,
    \end{split}
    \end{equation*}
and thus together with definition \eqref{def_matrices_M_N} the specific form of $\Mv$ follows. Furthermore, concerning the matrices $\Hv_r$, it follows from \eqref{eq_Kr_adj} that
    \begin{equation*}
        \spr{ \psi_j, K_r \phi_i }_\LtO
        = 
        \spr{K_r^* \psi_j, \phi_i}_\HsOT
        \overset{\eqref{eq_Kr_adj}}{=}
        \spr{E_s^*\kl{k(\cdot_\theta,\lambda_r) \psi_j (\cdot_{\xv} )} ,  \phi_i   }_\HsOT
    \end{equation*}
and thus together with \eqref{def_phi_m_product},
    \begin{equation*}
    \begin{split}
       \spr{ \psi_j, K_r \phi_i }_\LtO
        &=
        \spr{\kl{k(\cdot_\theta,\lambda_r) \psi_j (\cdot_{\xv} )} ,  \phi_i   }_\LtOT
        =
        \int_\Omega \int_\Theta  k(\theta,\lambda_r) \psi_j (\xv) \phi_i(\omega,\theta) \, d\theta \, d\omega  
        \\
        &\overset{\eqref{def_phi_m_product}}{=}
        \int_\Omega \int_\Theta  k(\theta,\lambda_r) \psi_j (\xv)    
        \psi_{n(i)}(\xv) \varphi_{l(i)}(\theta) \, d\theta \, d\xv  \,,
    \end{split} 
    \end{equation*}
which can be rewritten into
    \begin{equation}\label{helper_2}
        \spr{ \psi_j, K_r \phi_i }_\LtO
        =
        \spr{ \psi_j , \psi_{n(i)}}_\LtO
        \kl{\int_\Theta  k(\theta,\lambda_r)  \varphi_{l(i)}(\theta) \, d\theta } \,.
    \end{equation}
Together with the definition \eqref{def_Hr_wrd} of the matrices $\Hv_r$ this yields the assertion.
\end{proof}

The specific form \eqref{def_matrices_M_N_Hr_alternative} of the matrices $\Mv$ and $\Hv_r$ induced by \eqref{def_phi_m_product} has some beneficial consequences concerning their practical implementation. For example, since 
    \begin{equation*}
    \begin{split}
         \spr{\phi_i,\phi_j}_\HsOT 
         &= 
         \spr{\psi_{n(i)}\varphi_{l(i)},\psi_{n(j)}\varphi_{l(j)}}_\HsOT
         \\
         &= 
         \spr{\psi_{n(i)},\psi_{n(j)}}_{H^s(\Omega)}
         \spr{\varphi_{l(i)},\varphi_{l(j)}}_{H^s(\Theta)} \,,
    \end{split}
    \end{equation*}
in the case $s=0$ the scalar products $\spr{\psi_{n(i)},\psi_{n(j)}}_{H^s(\Omega)} = \spr{\psi_{n(i)},\psi_{n(j)}}_\LtO$ need to be computed only once for both the matrices $\Mv$ and $\Nv$. Furthermore, from \eqref{def_matrices_M_N_Hr_alternative} we can see that for assembling both the matrix $\Nv$ and the matrices $\Hv_r$ the same scalar products $\spr{ \psi_j , \psi_{n(i)}}_\LtO$ need to be computed, which reduces the numerical effort.

With the specific discretization discussed above, we have all ingredients to define

\begin{PNKR}\label{method_PNKR}
Let $s \in \R$, let $\Theta \subset \R^3$ be defined as in \eqref{def_Theta}, and let $\Omega \subset \R^2$ be bounded and Jordan measurable. Furthermore, let $\Kl{\psi_n}_{n=1}^N \subset H^s(\Omega)$ and $\Kl{\varphi_l}_{l=1}^L \subset H^s(\Theta)$ denote two sets of linearly independent functions and let $\Kl{\phi_m}_{m=1}^M \subset \HsOT$ be defined as in \eqref{def_phi_m_product}. Moreover, let the finite dimensional spaces $\XM$ and $\YN$ be defined as in \eqref{def_XN_YM}, and for all $r=1,\dots,R$ let $K_r$ be defined as in \eqref{def_operator_K_r} and let $\yd_r \in Y$. Then the \emph{projected Nesterov-Kaczmarz reconstruction (PNKR) method} for solving the galactic archaeology Problem~\ref{prob_Kv} reads
    \begin{equation}\label{PNKR}
    \begin{aligned}
        \zv_k^\delta &= \uvkd + \frac{k_R-1}{k_R+2}(\uvkd-\uv_{k-1}^\delta) \,,
        \qquad
        k_R = \operatorname{ceil}(k/R) \,,
        \\ 
        \uvkpd &=  T\kl{ \zv_k^\delta  + \okdt \Mv^{-1} \Hv_\kmod^T \Nv^{-1} \kl{ \wv_\kmod^\delta -  \Hv_\kmod  \zv_k^\delta  }} \,,
    \end{aligned}
    \end{equation} 
where $\okdt$, $\Mv$, $\Nv$, $\Hv_r$, and $\wv_r^\delta$, are defined as in \eqref{stepsize_general_discrete}, \eqref{def_matrices_M_N_Hr_alternative}, and \eqref{def_Hr_wrd}, respectively.
\end{PNKR}

As mentioned in Section~\ref{modifications}, the order of the equations induced by the index $k_R$ in the PNKR method \eqref{PNKR} does not necessarily have to be circular. Hence, in the numerical examples considered below we replace $k_R$ by a random permutation of $\Kl{1,\dots,R}$.

\subsection{The reduced PNKR method}

In the PNKR method \eqref{PNKR} it is advantageous from the computational point of view to keep the regularity of the functions $\psi_n$ as low as possible, such that the structure of the matrices $\Nv$ and $\Hv_r$ remains as simple as possible. On the other hand, when deriving the PNKR method we assumed that $\psi_n \in H^s(\Omega)$ in order to use the underlying structure of the problem to our advantage. At first, these two observations seem to contradict each other. However, we shall now see that this need not necessarily be the case.

For this, we first consider the special case $s=0$. In addition, we limit our attention to the typical situation that the domain $\Omega$ is rectangular, noting that the subsequent arguments can also be generalized to more complicated domains. In this case, we have that $H^s(\Omega)= \LtO$ and thus for the PNKR method the functions $\psi_n$ can be chosen as piecewise constant functions on a uniform rectangular subdivision of $\Omega$. Since also the domain $\Theta$ is rectangular, the same is true for the functions $\varphi_l$, and thus we get
\begin{equation}\label{def_matrices_M_N_Hr_L2_uniform}
    \begin{split}
        \Mv := c_M \Iv \,,
        \qquad
        \Nv := c_N \Iv \,,
        \qquad
        \Hv_r := \kl{ \delta_{j,n(i)}
        \kl{\int_\Theta  k(\theta,\lambda_r)  \varphi_{l(i)}(\theta) \, d\theta }  }_{i,j=1}^{M,N} \,,
    \end{split}
    \end{equation}
where $\Iv$ denotes the identity matrix and the constants $c_M$ and $c_N$ depend on the chosen subdivisions of $\Omega$ and $\Theta$. With this, the second line of \eqref{PNKR} then takes the form
    \begin{equation}\label{LK_discrete_combined_L2}
        \uvkpd = T\kl{ \uvkd + \okdt \kl{c_M c_N}^{-1} \Hv_\kmod^T \kl{ \wv_\kmod^\delta -  \Hv_\kmod  \uvkd } } \,,
    \end{equation}
which is particularly simple to implement. Next, consider the case $s > 0$. From the purely implementational point of view, the reason why piecewise constant functions can now no longer be used in \eqref{PNKR} is that then the matrix $\Mv$ defined in \eqref{def_matrices_M_N} is not well-defined any more. However, note that at least formally there holds \cite[Proposition~2.1]{Neubauer_1988}
    \begin{equation*}
        \Mv \overset{\eqref{def_matrices_M_N}}{=} 
        \kl{ \spr{\phi_i,\phi_j}_\HsOT  }_{i,j=1}^M 
        =
        \kl{ \spr{\phi_i,(E_s^*)^{-1}\phi_j}_\LtOT  }_{i,j=1}^M \,,
    \end{equation*}
where as before $E_s : \HsOT \to \LtOT$ denotes the embedding operator. This indicates that the matrix $\Mv^{-1}$ in \eqref{PNKR} can be understood as the discrete analog of the adjoint embedding operator $E_s^*$. This operator essentially behaves like a smoothing operator, and its application can e.g.\ be understood as a damping of higher order Fourier components of a function, or as a convolution with a suitable spatial filter \cite{Hubmer_Sherina_2022} in dependence on the index $s$ of the Sobolev space $\HsOT$. In our discrete setting, the action of $\Mv^{-1}$ on a vector $\uv$ can thus be approximated by the operator
    \begin{equation}\label{def_Zs}
        Z_s : \R^M \to \R^M \,,
        \qquad
        \uv = \Kl{u_i}_{i=1}^M \mapsto \Kl{(\uv \ast \zv_s)(i)}_{i=1}^M \,,
    \end{equation}
where the convolution-kernel $\zv_s$ is suitably chosen depending on $\HsOT$. Hence, one may use piecewise constant basis functions in the PNKR method also for the case $s > 0$ by replacing the application of $\Mv^{-1}$ with the application of $Z_s$. Together with the specific form of the matrices $\Nv$ and $\Hv_r$ given in \eqref{def_matrices_M_N_Hr_L2_uniform} we thus arrive at

\begin{SPNKR}\label{method_SPNKR}
Let $s \in \R$, $\Omega := [x_1^{\min},x_1^{\max}]\times[x_2^{\min},x_2^{\max}] \subset \R^2$, and let $\Theta \subset \R^3$ be defined as in \eqref{def_Theta}. Furthermore, let $\Kl{\psi_n}_{n=1}^N$ and $\Kl{\varphi_l}_{l=1}^L$ denote two sets of piecewise constant functions on uniform regular subdivisions of $\Omega$ and $\Theta$, respectively. Moreover, let the finite dimensional spaces $\XM$ and $\YN$ be defined as in \eqref{def_XN_YM}, and for all $r=1,\dots,R$ let $K_r$ be defined as in \eqref{def_operator_K_r} and let $\yd_r \in Y$. Then the \emph{reduced PNKR method} for solving the galactic archaeology Problem~\ref{prob_Kv} is given by
    \begin{equation}\label{rPNKR}
    \begin{split}
        \zv_k^\delta &= \uvkd + \frac{k_R-1}{k_R+2}(\uvkd-\uv_{k-1}^\delta) \,,
        \qquad
        k_R = \operatorname{ceil}(k/R) \,,
        \\ 
        \uvkpd &= T\kl{ \uvkd + \okdt \kl{c_N}^{-1} Z_s \kl{ \Hv_\kmod^T  \kl{ \wv_\kmod^\delta -  \Hv_\kmod  \uvkd }} } \,,
    \end{split}
    \end{equation}
where $\wv_r^\delta$, $\okdt$, $\Hv_r$, $Z_s$ are defined as in \eqref{def_Hr_wrd}, \eqref{stepsize_general_discrete}, \eqref{def_matrices_M_N_Hr_alternative}, \eqref{def_Zs} respectively, and the constant $c_N$ denotes the size of any element of the uniform subdivision of $\Omega$.
\end{SPNKR}

As before, the index $k_R$ in the reduced PNKR method \eqref{rPNKR} can be replaced by a random permutation of $\Kl{1,\dots,R}$. Note that the reduced PNKR method is computationally much less demanding than the full PNKR method \eqref{PNKR}. Furthermore, note that the convolution operator $Z_s$ can also be replaced by a suitable damping in the discrete Fourier or wavelet domain. However, due to its approximate nature with $\Mv^{-1}$ being replaced by $Z_s$ we suggest to use the reduced PNKR method only if the computational effort for running the full PNKR method \eqref{PNKR} is practically infeasible.

\section{Numerical examples}\label{sect_numerical_examples}

To test our reconstruction approach, we apply it to a dataset simulated from a known ground truth density. In Sections~\ref{ssec_discretization}-\ref{ssec_implementation}, we describe the dataset used, as well as details about the discretization and implementation. In Section~\ref{ssec_results}, we present results of our reconstruction algorithm applied to our test problem using two separate sets of basis functions. We  show that our method performs well in a quantitative sense in both cases. To further validate our method, in Section~\ref{ssec_ppxf} we compare our recoveries to those obtained using the widely-used Penalized Pixel-Fitting (pPXF) \cite{Cappellari17} spectral fitting software, and compare qualitative aspects of the obtained recoveries.

\subsection{Visualization}\label{ssec_vis}

To visualize the $5$-dimensional density $f(\xv,\theta)$, we will display several projections which are commonly used in galaxy IFU analysis see e.g.\ \cite{guerou16}. These comprise maps of mean metallicity and age, and maps of several statistics describing the velocity distribution. In addition, we will explicitly show a selection of velocity distributions evaluated at a selection of positions. These quantities are mathematically defined below, and the visualization of the ground truth is shown in Figure~\ref{fig_moments}.

To visualize the age and metallicity variations, we show their first order moments as functions of $\xv$. We first define some useful marginal probability distributions of the target distribution function $f$,
    \begin{equation*}\label{marginals}
    \begin{split}
        M := \int_\Omega\int_\Theta f(\xv,\theta) \, d\xv \, d\theta \,, 
        &\qquad 
        p(\xv) := \frac{1}{M} \int_\Theta f(\xv,\theta) \, d\theta \,, 
        \\
        p(\xv,z) := \frac{1}{M} \int_{v_\mathrm{min}}^{v_\mathrm{max}} \int_{t_\mathrm{min}}^{t_\mathrm{max}} f(\xv, \theta) \, dv \, dt \,, 
        &\qquad
        p(\xv,t) := \frac{1}{M} \int_{v_\mathrm{min}}^{v_\mathrm{max}} \int_{z_\mathrm{min}}^{z_\mathrm{max}} f(\xv, \theta) \, dv \, dz \,.
    \end{split}
    \end{equation*}
The spatial variations of the mean age and metallicity are then given by
    \begin{equation*}
        \mu_z(\xv) := \int_{z_\mathrm{min}}^{z_\mathrm{max}}  \frac{z\, p(\xv,z)}{p(\xv)} \, dz \,,
        \qquad
        \text{and}
        \qquad
        \mu_t(\xv) := \int_{t_\mathrm{min}}^{t_\mathrm{max}}  \frac{t \, p(\xv,t)}{p(\xv)} \, dt \,.
    \end{equation*}
To visualize marginal velocity distributions, we will show a slightly different quantity in order to later facilitate comparison with existing methods (see Section~\ref{ssec_ppxf}). The target density $f$ as we have defined it has units of mass per volume element, i.e. integrated over the whole domain it will give the total stellar mass of a galaxy (see equation \ref{eq_mass_normalisation}). A related quantity is the \emph{light-weighted} distribution $f_\mathrm{LW}$, which has units of luminosity rather than mass. These are related via
    \begin{equation*}
        f_\mathrm{LW}(\xv, v,z,t) := f(\xv, v,z,t) \int_{\lambda_\mathrm{min}}^{\lambda_\mathrm{max}}S(\lambda ; t,z) \, d\lambda \,,
    \end{equation*}
where $S$ are SSP templates in equation \eqref{eq_main_general}. As in \eqref{marginals}, we can define the light-weighted marginal distributions $p_\mathrm{LW}(\xv)$ and $p_\mathrm{LW}(\xv, v)$, which finally giving us light-weighted velocity distribution at a position $\xv$, i.e.,
    \begin{equation}
        p_\mathrm{LW}(v|\xv) := \frac{p_\mathrm{LW}(\xv, v)}{p_\mathrm{LW}(\xv)} \,.
    \end{equation}
We will show maps of four quantities summarising these velocity distributions: their mean $\mu_v$, standard deviation $\sigma_v$ and two higher order statistics $h_3$ and $h_4$. These are coefficients of Gauss-Hermite expansions, which are commonly used to describe velocity distributions in galactic astronomy \cite{vdmarel_franx_93}, and roughly correspond to skewness and kurtosis, respectively. In addition to these four quantities, we will also show a selection of velocity distributions $p_\mathrm{LW}(v|\xv_i)$ explicitly evaluated at $9$ locations $\xv_i$. These locations have been chosen to maximise the variety of distributions on display.

\subsection{Ground truth and discretization}
\label{ssec_discretization}

The ground truth density $f$ comes from a simple descriptive model of galaxy properties and the dataset is simulated with parameters comparable to existing, real IFU galaxy datasets of medium-size and high spectral quality. The specific choices are described in detail below but, in overview, they provide a reasonably realistic yet idealized and computationally-manageable arena for developing a novel reconstruction technique. 

\begin{figure}[ht!]
    \centering
    \includegraphics[width=\textwidth]{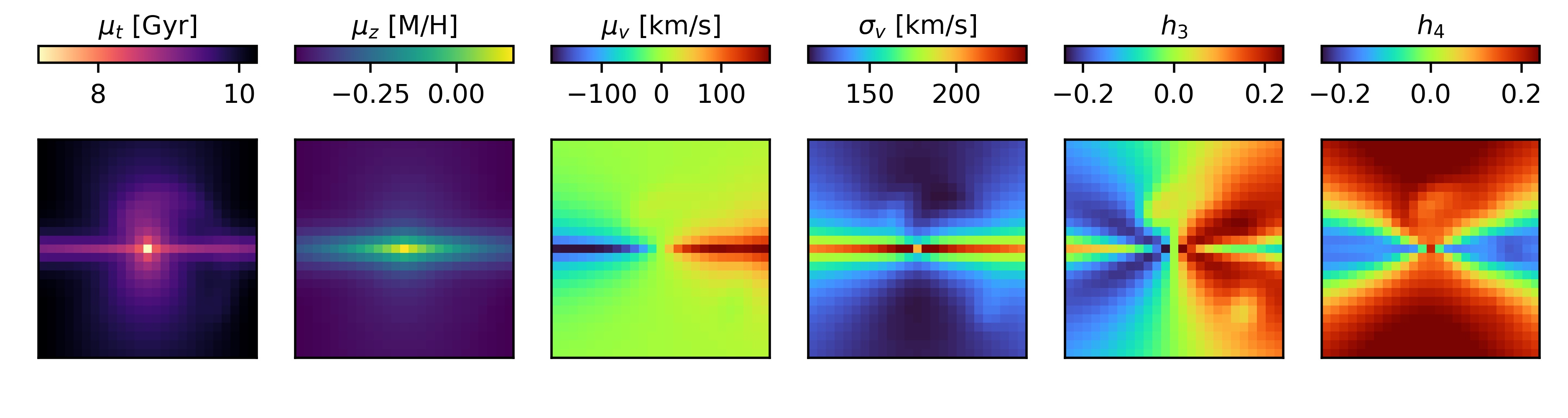} \\   
    \includegraphics[width=\textwidth]{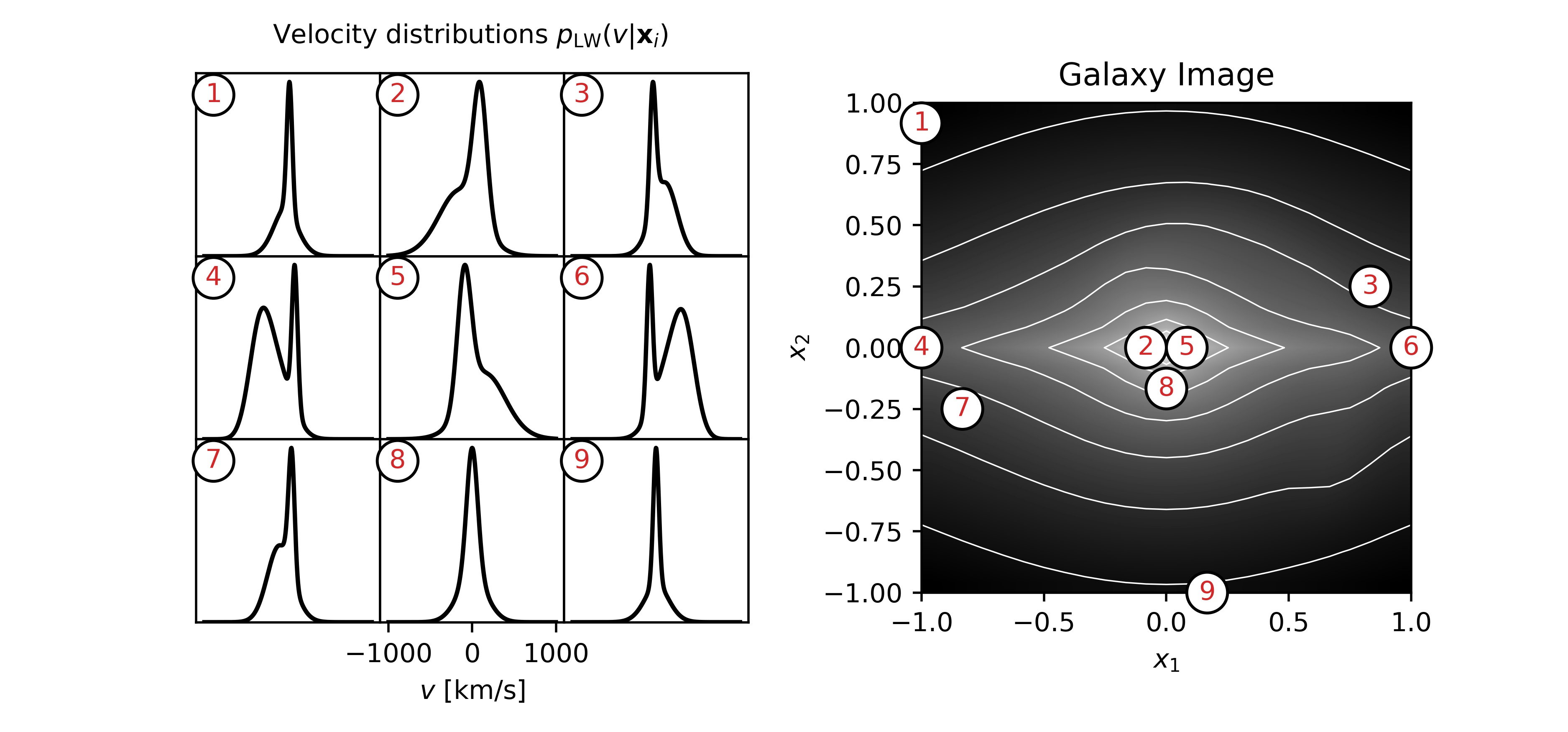} \\   
    \caption{Visualisation of the ground truth density. Top row shows maps of mean age $\mu_t$, mean metallicity $\mu_z$, followed by four statistics describing the velocity distribution: mean $\mu_v$, standard deviation $\sigma_v$, and Gauss Hermite expansion coefficients $h_3$ and $h_4$. The bottom row shows a grid of velocity distributions $p_\mathrm{LW}(v|\xv_i)$ evaluated at the positions $i=1,...,9$ highlighted on the galaxy image. See Section~\ref{ssec_vis} for details. The galaxy consists of two superposed, counter-rotating disks, giving rise to the bimodal velocity distributions particularly prominent in panels $i=4$ and $6$, and a weak stream-like component accounting for the protruding contours in the bottom-right of the image and the blobs in the $h_3$ distribution.
    }
    \label{fig_moments}
\end{figure}

The ground truth is a mixture of three physical galactic components. The dominant component is a thin, fast-rotating disk which contributes $70\%$ of the total stellar mass, while a thicker, slower-rotating disk contributes $29\%$. The two disks are counter-rotating, producing the highly skewed and bimodal velocity distributions seen in Figure~\ref{fig_moments}. The remaining $1\%$ of stars are in a stream-like component which is responsible for some of the irregular features seen in Figure~\ref{fig_moments}, such as the slightly protruding contours, and blobs in the $h_3$ map. Each component is specified with independent star-formation histories, chemical-enrichment prescriptions from \cite{Zhu20}, and stellar-age dependent morphologies and kinematics. Code to evaluate this model is available online \cite{Popkinmocks}, while a complete description of the astrophysical ingredients is in preparation. The dataset is stored as both a noise-free version and a noisy version created by adding pixel-wise noise sampled from a Gaussian distribution with standard deviation of 1\% of the noise-free value. This results in an overall noise level of $\delta = 1\%$.

We next define the discretization of our domain. The discretization of $\Theta$ consists of a uniform subdivision of $[-1000,1000]$ with $27$ grid points for $v$, a nonuniform subdivision of $[-2.66, 0.36]$ with $7$ grid points for $z$ and a nonuniform subdivision of $[0.015, 14.25]$ with $19$ grid points for $t$. The choices in $z$ and $t$ are dictated by our choice of SSP templates $S$, for which we use a subset of the MILES models \cite{Vazdekis_SanchezBlazquez_FalconBarroso_Cenarro_Beasley_Cardiel_Gorgas_Peletier_2010}. The SSP template spectra $S$ are sampled at $687$ different wavelengths $\lambda$ between $4800$nm and $5700$nm with a velocity resolution of $75$km/s, similar to the Fornax 3D IFU survey \cite{Sarzi18} which uses the MUSE IFU instrument. This velocity resolution in turn sets our discretization in $v$. For the discretization of $\Omega$ we have a uniform grid of $[-1,1]^2$ with $26$ discretization points in each dimension.

\subsection{Choice of basis functions}\label{ssec_basis}

We perform numerical experiments using two different sets of basis functions. First, to test the procedure with minimal computational cost, we use a basis of piecewise constant functions. This corresponds to choosing $s=0$ in \eqref{PNKR}. In this case, we have one basis function per element of the discretization, with the basis function being defined as constant $1$ within the element, and $0$ everywhere else. Thus, the amount of basis functions is $N:=625$ for $\Omega$ and $L:=2808$ basis functions for $\Theta$. The resulting mass matrix $\Mv$ is a diagonal matrix, making the application of $\Mv^{-1}$ very simple without involving any expensive matrix-vector multiplication.  

In a next step, to incorporate the expected smoothness of the ground truth, we test the procedure using piecewise linear basis functions, corresponding to $s=1$ in \eqref{PNKR}. The basis functions are defined as the tensor product of 1D FEM hat functions. In this case, we have one basis function per node of the discretization. In order to retain the same dimension of the problem as in the piecewise constant case, we define a new discretization, where each node is the average of two neighboring nodes of the original discretization. This again results in $N:=625$ basis functions for $\Omega$ and $L:=2808$ basis functions for $\Theta$. The resulting mass matrix $\Mv$ has a sparse block-tridiagonal structure (Figure~\ref{fig_M_H1}) with a length of $N\cdot L=1.755 \cdot 10^6$.  

\begin{figure}[ht!]
    \centering
    \includegraphics[scale=0.16]{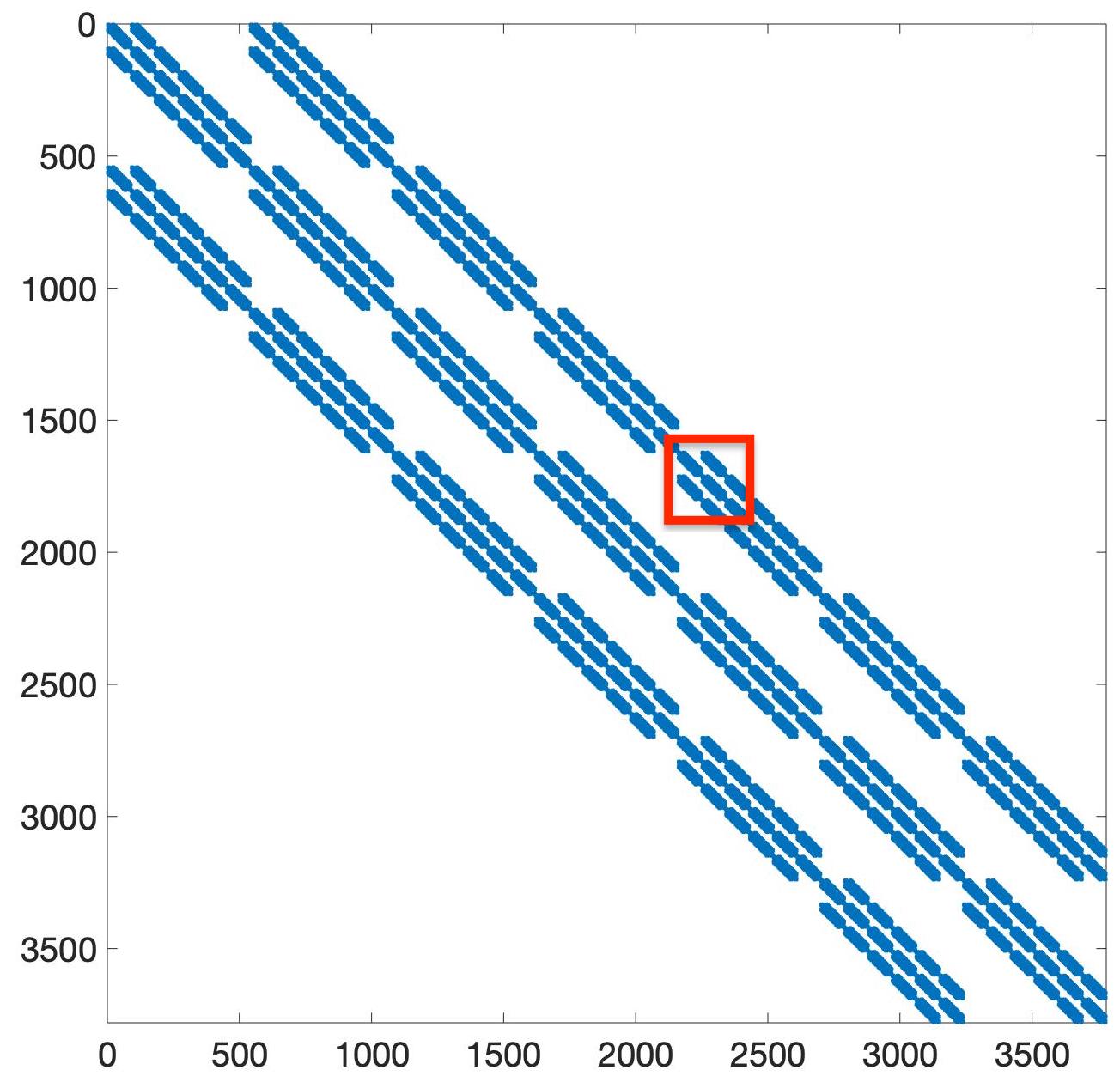} \includegraphics[scale=0.16]{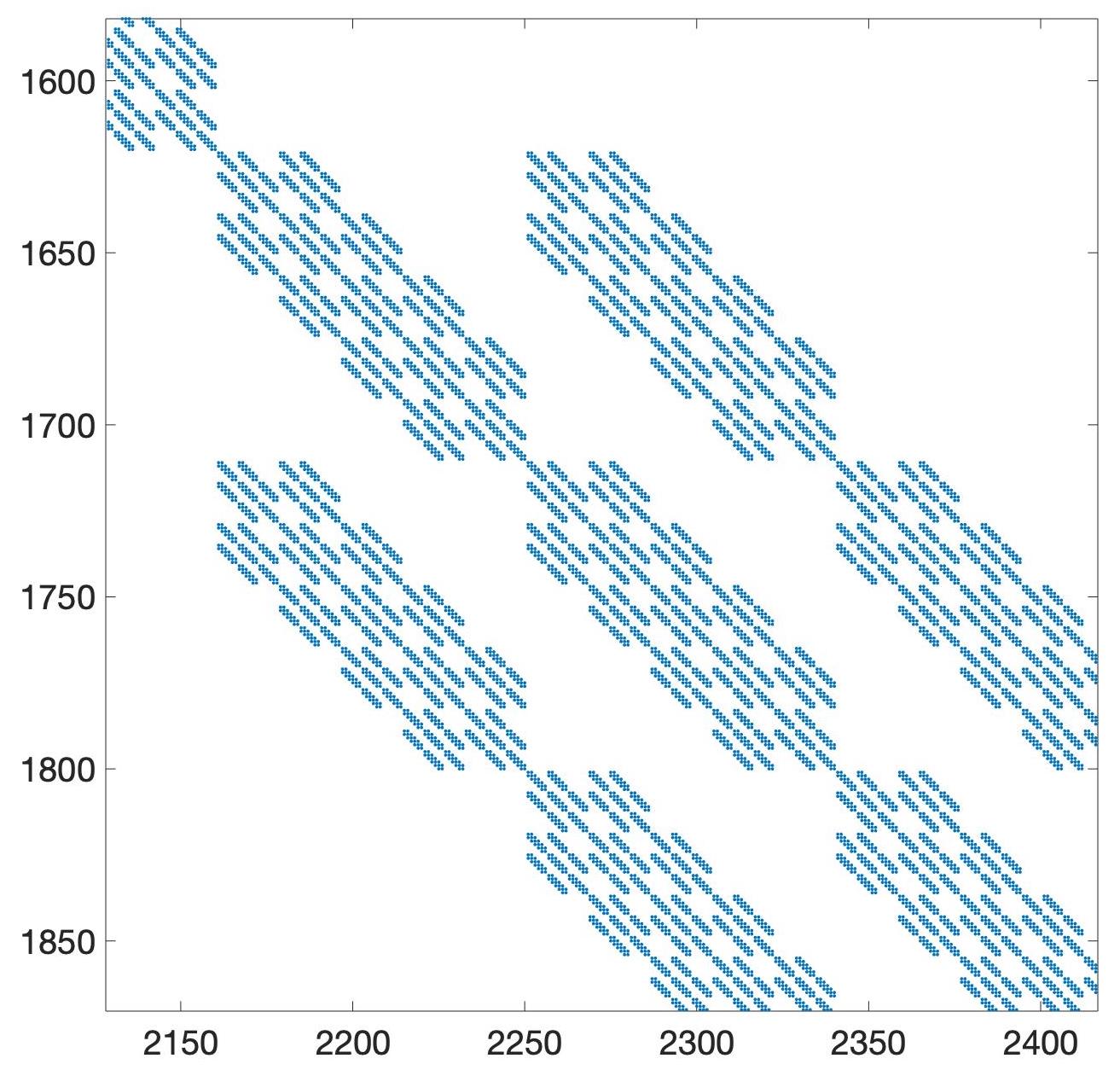}
    \caption{Block-tridiagonal structure of the mass matrix $\Mv$ for piecewise linear basis functions. For visualization purposes, the dimension has been reduced to $N=18$ and $L=210$ for this figure. (a) illustrates the structure of the entire matrix, whereas (b) shows additional detail in the zoomed in area within the red square in (a)}
    \label{fig_M_H1}
\end{figure}

\subsection{Implementation and computational aspects}\label{ssec_implementation}
Computations were carried out in Matlab on a desktop computer with an Intel Xeon E5-1650 processor with $3.20$ GHz and $16$ GB RAM. The most expensive part is computing $\Mv^{-1}$ at every step $k$ in  \eqref{PNKR}. As is standard in dealing with large-scale sparse matrices, we do not apply the inverse directly, but compute a solution of the forward operator problem 
    \begin{equation}\label{forward} 
        \Mv \xv = \zv_k^\delta \,,
        \qquad 
        \zv_k^\delta = \Hv_\kmod^T \Nv^{-1} \kl{ \wv_\kmod^\delta -  \Hv_\kmod  \uvkd } \,.
    \end{equation} 
The large scale nature of the problem renders assembly of the full matrix $\Mv$ infeasible. However, taking note of the Kronecker structure of $\Mv$, we can represent the matrix as $\Mv = \Psi \otimes \Phi \,,$ where $\otimes$ denotes the Kronecker product and
    \begin{equation}\label{Phi_Psi} 
        \Psi :=\kl{ \spr{ \psi_i, \psi_j }_{H^1(\Omega)}}_{i,j=1}^N \,,
        \qquad
        \text{and} 
        \qquad
        \Phi :=\kl{ \spr{ \varphi_i, \varphi_j }_{H^1(\Theta)} }_{i,j=1}^L \,.
    \end{equation} 
This structure allows multiplication with a vector to be implemented in a very efficient way, where only the much smaller matrices $\Phi$ and $\Psi$ need to be assembled; see, e.g., \cite[Section 12.3]{Golub_VanLoan_2013} for more information about the mathematics of Kronecker-structured systems. Similarly, the inverse of $\Mv$ can be represented as $ \Mv^{-1} = \Psi^{-1}\otimes \Phi^{-1} \,,$ where $\Psi^{-1}$ and $\Phi^{-1}$ can be computed in a pre-processing step, for example using LU decomposition. The application of $M^{-1}$ can now be performed very efficiently. Using Matlab notation, $\Mv^{-1} z_k = \verb+reshape+ ( \Psi^{-1} \verb+reshape+(z_k, N,L) \Phi^{-1}, N \cdot L,1)$. Note that since $\Psi^{-1}$ and $\Phi^{-1}$ are full matrices, one application of $\Mv^{-1}$ is much more expensive than one application of $\Mv$. 

Regarding the stepsize and stopping criterion, we use \eqref{stepsize_general_discrete} with $\tau=1.2$ and constant $\alpha_k^\delta$. We use values of $\alpha_k^\delta = 10$ in the case of piecewise constant basis functions, and $\alpha_k^\delta =1000$ in the case of piecewise linear basis functions. In  Figure~\ref{compare_h5}, we show results for different values of the parameter $\beta$ in \eqref{beta}. In addition to the default value $\beta=1$, we show reconstructions for $\beta=0.5$ and $\beta=0.01$, corresponding to a lower weight placed on the first order derivative.

The inner products in \eqref{Phi_Psi} are calculated using the trapezoid rule. We divide  the support of the integrand, i.e. $\operatorname{supp}(\psi_i) \, \cap \,  \operatorname{supp}(\psi_j) $ (and analogously $\operatorname{supp}(\varphi_i) \, \cap \,  \operatorname{supp}(\varphi_j) $) using $50$ grid points within each dimension with equidistant spacing. The right hand sides $\wv_k^\delta$ in \eqref{def_Hr_wrd} are also computed with trapezoid integration. 

Looping through the equations in a random ordering has been reported to speed up convergence \cite{Strohmer_2006}. Therefore, after every full loop (i.e. after every $R$ iterations), we create a random permutation of $(1,...,R)$ using the Matlab function $\verb+randperm+(R)$ and then loop through the $R$ equations in the ordering specified by the permuted tuple. We start the iteration with the initial guess $u_0=0$. After every full loop, we calculate the residual and the relative error.
    \begin{equation*}
        \operatorname{res}_k^2 = 
        \sum_{j=1}^L \norm{ \Hv_{j} (u^*-u_k^\delta) }^2 \,,
        \qquad
        \text{and}
        \qquad
        \operatorname{error}_k = \frac{ \norm{ u^*-u_k^\delta }}{\norm{u^*}} \, , 
    \end{equation*}
    where $u^*$ denotes the representation of the ground truth $f$ in terms of basis functions, i.e. $f = \sum_{i=1}^M u^*_i \phi_i$.
We summarize the procedure in Algorithm~\ref{algorthim_pnkr}.
\begin{algorithm*}[ht!]
\caption{Projected Nesterov Kaczmarz Reconstructor (\textbf{PNKR})}
\label{algorthim_pnkr}
\vspace{1mm}
\textbf{Input} data $y^\delta_r$ for $r=1...R$,  noise level $\delta \in \R^R$, wavelengths $\lambda_1, \cdots, \lambda_R$ and corresponding SSP spectra $S$, discretization of $\Omega$ and $\Theta$
\begin{algorithmic}
\State - select piecewise constant $(s=0)$ or piecewise linear $(s=1)$ basis functions $\{\varphi_i \}_{i=1}^L$ and $\{\psi_j \}_{j=1}^N$. 
\State - compute vectors $\wv_r^\delta = \langle \yv_r^\delta, \psi_j \rangle_{j=1}^N $
\State - Implement function representations of $\Nv$, $\Mv$ and $\Hv_r$ for $r=1...R$ defined by \eqref{def_matrices_M_N_Hr_alternative}
\State - Set $\uv_0 = 0 \in \R^M$, $k_R=1$, $k=0$
\vspace{2mm}
\While{discrepancy principle \eqref{stepsize_general_discrete} is not fulfilled for all equations}
    \State $P= \verb+randperm+(R)$
    \For{$\hat{r}=1:R$} 
    \State set index $r = P_{\hat{r}}$
    \State check discrepancy principle: 
        \If{$\left( \Nv^{-1} \kl{ \wv_r^\delta -  \Hv_r  \uvkd }\right)^T \Nv \Nv^{-1} \kl{ \wv_r^\delta -  \Hv_r  \uvkd } > 1.2 \delta_r$}
        \State - apply Nesterov acceleration:
        \State $ \zv_k^\delta = \uvkd + \frac{k_R-1}{k_R+2}(\uvkd-\uv_{k-1}^\delta) $
        \State - compute or choose stepsize $\okdt$
        \State - compute new iterate and apply thresholding: 
        \State $ \uvkpd =  T(\zv_k^\delta  + \okdt \Mv^{-1} \Hv_r^T \Nv^{-1} \kl{ \wv_r^\delta -  \Hv_r  \zv_k^\delta  })$
        \State - increment iteration index 
        \State $k=k+1$
        \EndIf
\EndFor
\State $k_R = k_R +1 $
\EndWhile
\vspace{2mm}
\State - calculate density $f$ from coefficient vector $u_k$ by multiplying with basis functions
\State  $f = \sum_{i=1}^M u_{k,i} \psi_{n(i)} \varphi_{l(i)}$
\end{algorithmic}
\vspace{2mm}
\textbf{Output}: reconstructed 5D density $f$
\end{algorithm*}

\subsection{Numerical results}\label{ssec_results}
\begin{figure}[ht!]
    \centering
        \includegraphics[width=0.45\textwidth]{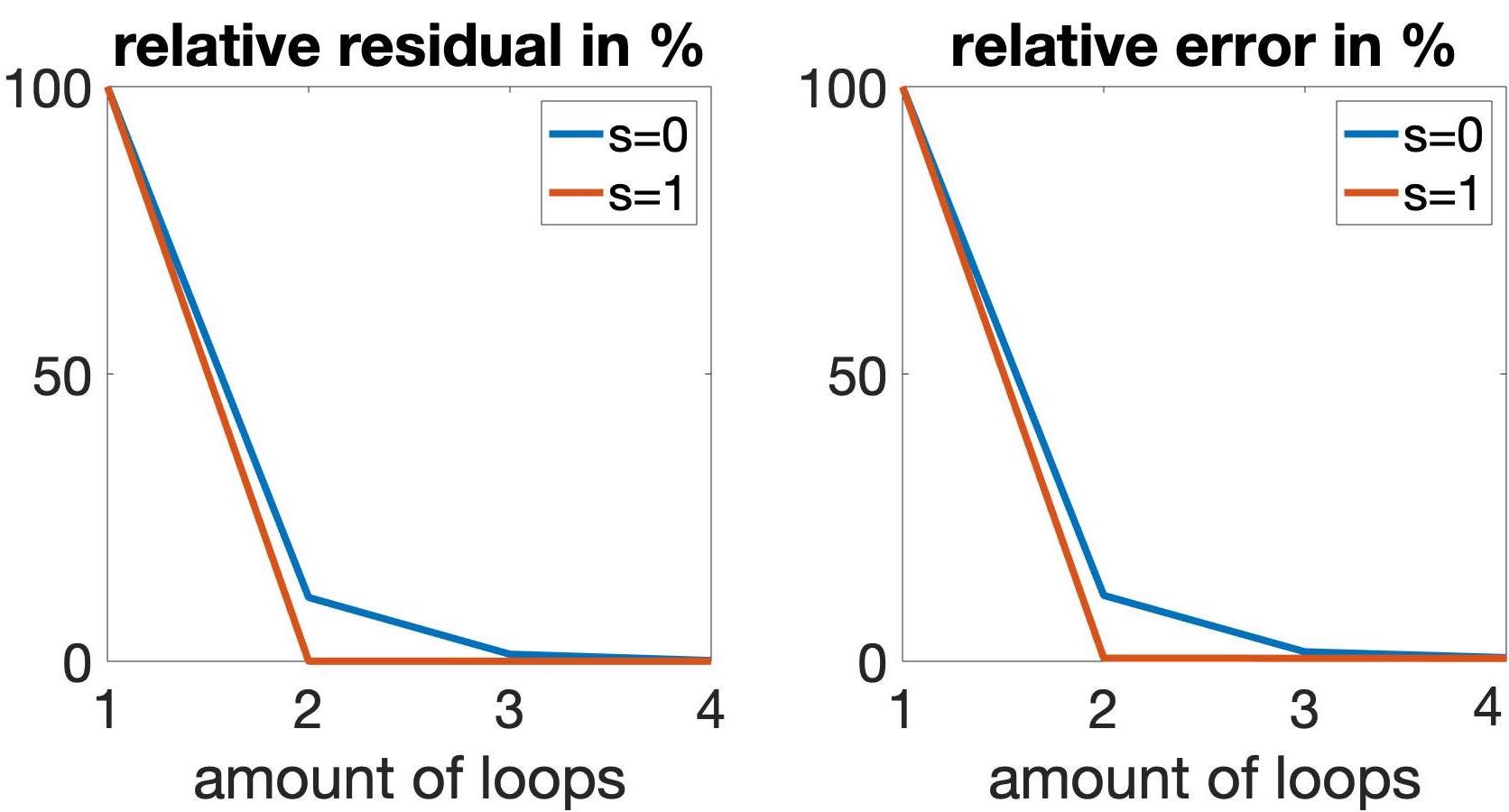} \hspace{2mm}
        \includegraphics[width=0.45\textwidth]{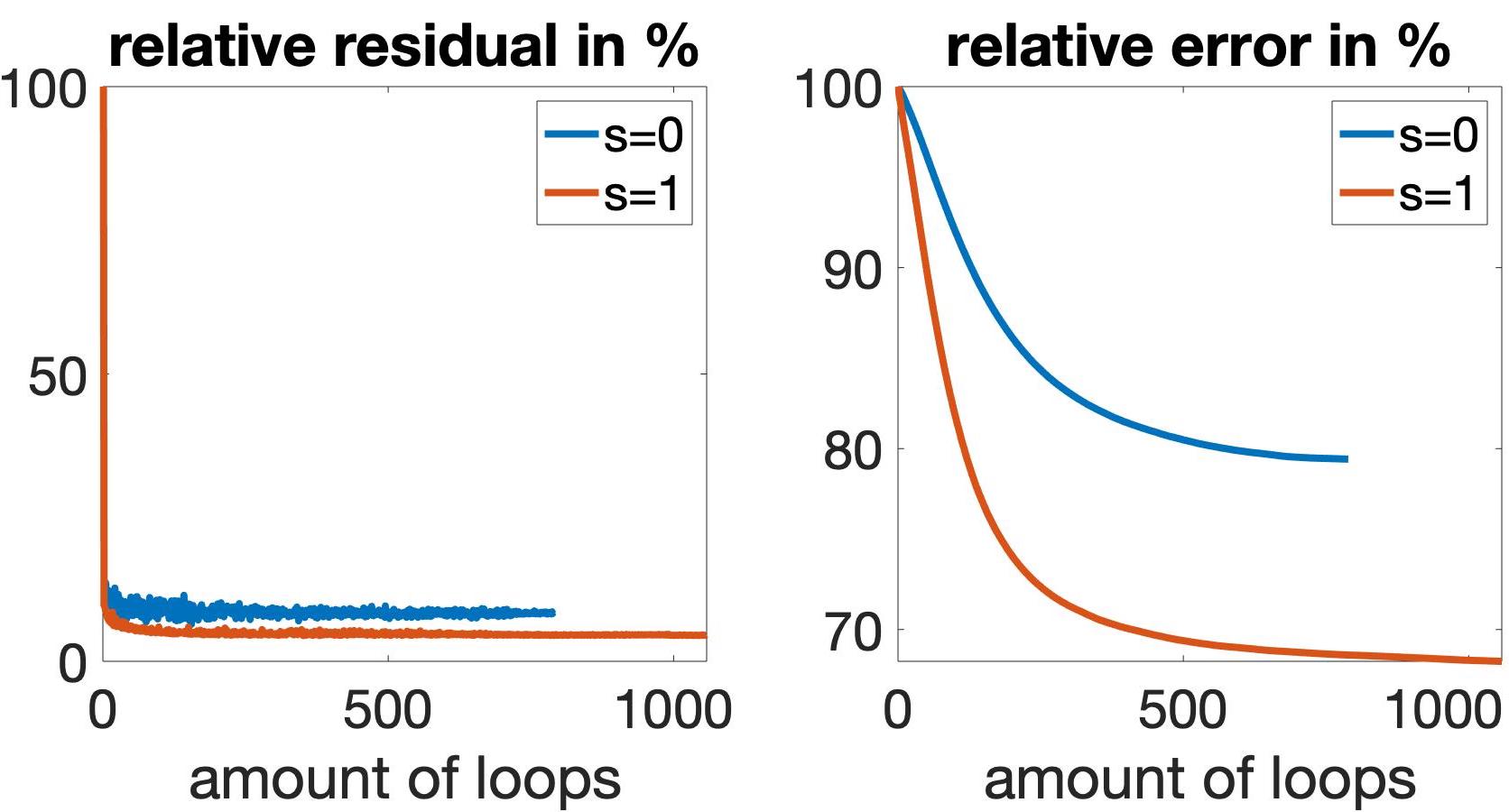}
    \caption{First and second panel: Relative residual and relative error measured from PNKR being applied to the ground truth $u_\perp^*$, i.e. without null space components in the solution and with noise-free data being known. Third and fourth panel: Relative residual and relative error measured from PNKR being applied to the ground truth $u^*$ with a noise level of $1\%$ in the data. The iteration was stopped using the discrepancy principle \eqref{stepsize_general_discrete}.}
    \label{reconstructions}
\end{figure}
We perform the PNKR method \eqref{method_PNKR} using piecewise constant ($s=0$) and piecewise linear ($s=1$) basis functions as described in Section~\ref{ssec_basis}. To validate our method, we first project the ground truth to the orthogonal complement of the null space of the operator, i.e., $u^*_\perp = \sum_{j=1}^L \Hv^T_j \Hv_j u^*$. In addition, we assume noise-free data to be known. In the piecewise constant case, within four loops through all wavelengths the relative error reaches $0.5\%$ and the relative residual reaches $0.013\%$. In the piecewise linear case, similar values are achieved after just one loop (Figure~\ref{reconstructions}). This indicates that the method is able to retrieve the correct density if no null space components are present. However, we observed that in general the null space of the problem is rather large, which means that components outside this null space cannot be recovered. 

We now apply PNKR to the ground truth $u^*$ without projection. We consider the more realistic scenario where the data  $y^\delta$ is impacted by the presence of Gaussian noise with a relative noise level of $1\%$. We show the measured relative residuals and errors in Figure~\ref{reconstructions}. In the case of piecewise linear basis functions, the procedure stops after $1058$ loops. The residual falls gradually to about $4.7\%$, while relative errors remains at $68.2\%$. In the case of piecewise constant basis functions, the iterations stops after $720$ loops. The residual falls  to about $8.3\%$, while relative errors remains at $79.5\%$. The high relative errors indicate that significant null space components are present in the ground truth $u^*$. 

Due to the large scale nature of the problem, PNKR requires significant computational resources. Using piecewise constant basis functions, one loop through all equations takes about $54s$. The total elapsed time until the iteration stopped amounted to about $4$ hours.  For piecewise linear functions, this increases to $16$ minutes for one loop and $48$ hours in total.

\subsection{Comparison with pPXF}\label{ssec_ppxf}

To validate our method we compare our recoveries against those measured using the
Penalized Pixel-Fitting (pPXF) software \cite{Cappellari17}, a mature and popular tool for modelling galaxy IFU datacubes. This software can model stellar population-kinematics (in addition to other effects such as gas emission and dust absorption), and uses flexible, non-parametric models of the velocity distribution. However, one limitation of pPXF is that it does not allow for the full datacube to be fitted simultaneously, instead modelling its constituent spectra independently. While existing datacube modelling tools do exist \cite{davis13, di_teodoro_15, Bouche_15, Varidel19}, none of these are suitable for combined modelling of stellar-populations and complex velocity distributions. Hence, we perform a baseline comparison only against pPXF.

We now summarise the relevant aspects of the pPXF forward model. Whereas we attempt to recover the joint distribution function $f$, pPXF makes the simplifying assumption that $f$ is separable, with the specific factorisation from \eqref{eq_factor_joint}. Inserting this factorisation into \eqref{eq_main_general}, gives the pPXF forward model
    \begin{equation}\label{eq_ppxf}
    \begin{split}
        f_1(\xv_i) \int_{\vmin}^{\vmax} \frac{1}{1 + v/c} f_2(v | \xv_i) \kl{ \int_{\zmin}^{\zmax} \int_{\tmin}^{\tmax}  S\kl{\frac{\lambda}{1 + v/c} ; t, z}  f_3(t, z| \xv_i) \, dt \, dz  } \, dv
        = y_i(\lambda) \,.
    \end{split}
    \end{equation}
Compared to our forward model in \eqref{eq_main_general}, the right-hand side is now a collection of 1D spectra $y_i(\lambda)$ at positions indexed by $i$, rather than a 3D datacube $y(\lambda,\xv)$. The specific form of the population distribution $f_3(t, z| \xv_i)$ used by pPXF are piecewise constant values on a rectangular grid in $t$ and $z$, with Tikhonov regularisation used to promote smoothness in the recovery. The velocity distributions $f_2(v | \xv_i)$ in pPXF are as Gauss-Hermite expansions with parameters $\mu_v, \sigma_ v, h_3, h_4, \dots , h_N$, with regularisation (known as bias in pPXF parlance) to encourage the non-Gaussian parameters $h_i$ to remain small. Note that this velocity distribution is necessarily light-weighted, not mass-weighted (see the discussion in Section~\ref{ssec_vis}).

\begin{figure}[ht!]
    \centering
    \includegraphics[width=\textwidth]{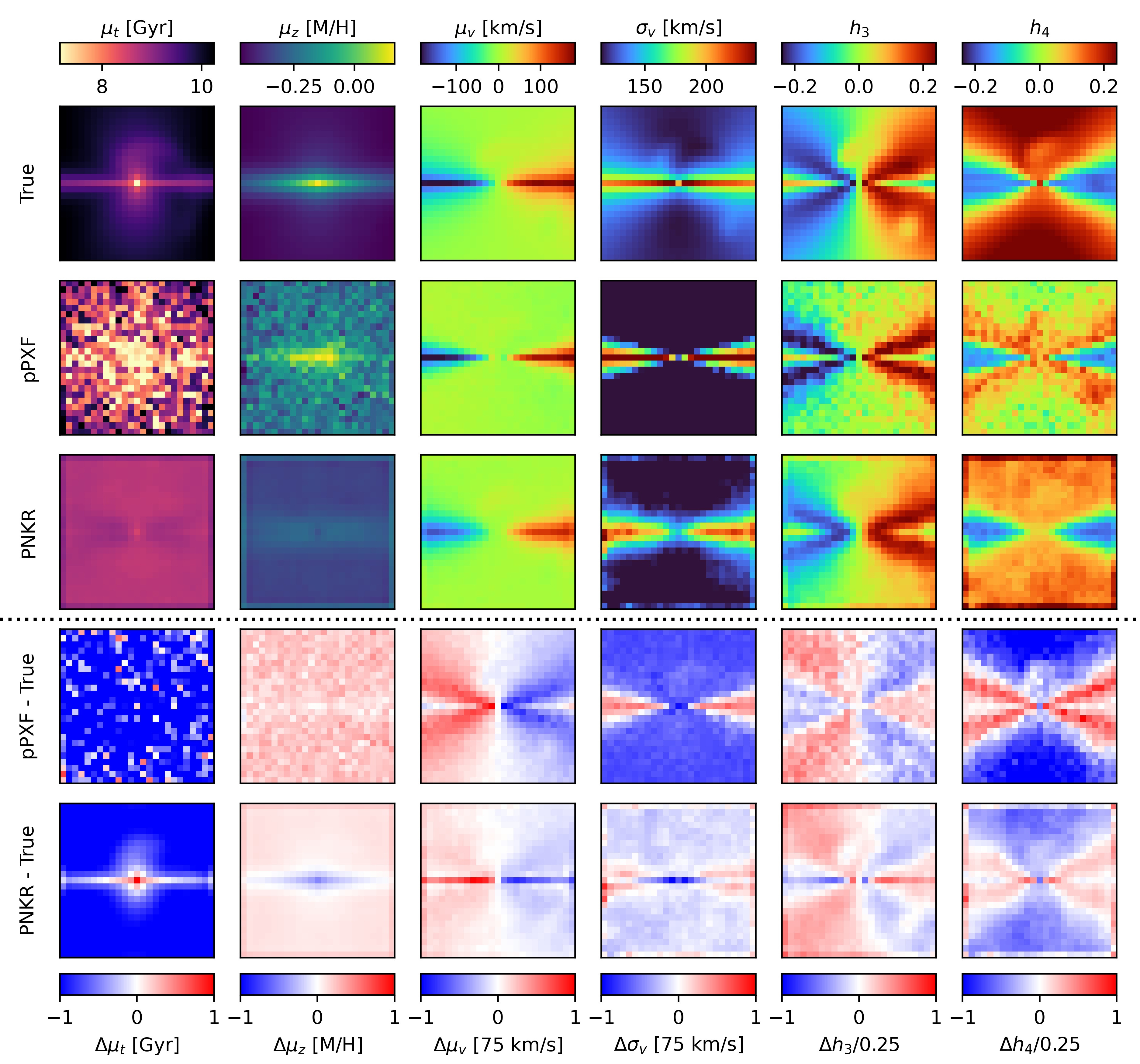}
    \caption{Comparison of recoveries using pPXF and PNKR. Columns show the same quantities as in Figure \ref{fig_moments}. The top three rows show the ground truth, the pPXF and PNKR recoveries. The bottom two rows show the error between the recoveries and the ground truth. Whilst neither pPXF nor PNKR accurately reconstruct population maps $\mu_t$ and $\mu_z$, for the kinematics (four right-most columns) PNKR achieves an overall more accurate reconstruction.
    }
    \label{compare_moments}
\end{figure}

\begin{figure}[ht!]
    \centering
    \includegraphics[width=\textwidth]{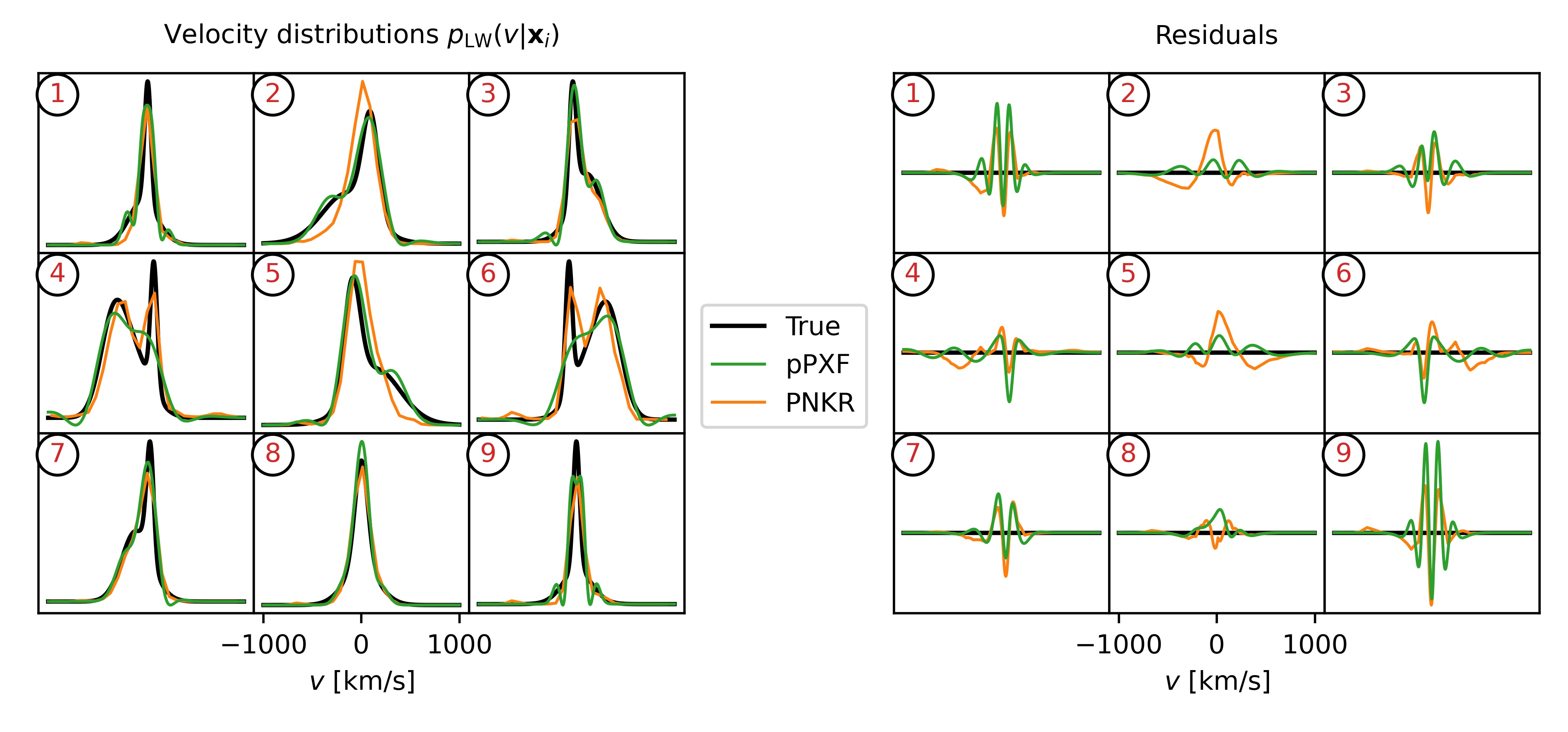}
    \caption{
    Comparison of recovered velocity distributions using pPXF and PNKR. The $i=1,..,9$ distributions are the same as in Figure \ref{fig_moments}. The left grid shows the distributions, the right shows residuals from the ground truth. Overall, the amplitude of the PNKR residuals are smaller then those from pPXF. Panels 4 and 6 show that PNKR can produce bimodal distributions, while the Gauss-Hermite parameterisation used by pPXF struggles. Panels 2 and 5 lie along the disk plane, where the PNKR reconstruction is affected by over-smoothing.
    }
    \label{compare_velocity_distributions}
\end{figure}

We fit the noisy, mock data $y$ testing several settings. Using pPXF we individually fit each of the $625$ spectra in the datacube varying over three settings: the order of Gauss-Hermite expansion (6 or 8), the amount of regularisation for stellar-populations (5 logarithmically spaced values from  $10^{-4}$ to  $10^{4}$), and the amount of regularisation for kinematics (5 logarithmically spaced values from $10^{-2}$ to $10^{2}$ times the pPXF default). We then fit the whole datacube using PNKR. We try both piecewise constant ($s=0$) and piecewise linear ($s=1$) basis functions, and for the latter we also vary the $\beta$ parameter defined in equation \eqref{beta} which influences the smoothness of the recovery. Figures~\ref{compare_moments} and \ref{compare_velocity_distributions} compare the recoveries using pPXF and PNKR. We show results using the most favourable input settings in each case. For pPXF this is an order 6 expansion, population regularisation parameter of $10^4$, and the default kinematic regularisation. For PNKR, this is $s=1$ and $\beta=1$.

Overall, PNKR recoveries are comparable to those achieved with pPXF. The two left-most columns of Figure~\ref{compare_moments} show that neither approach well reproduces the mean age or metallicity maps, likely due to a well known degeneracy between these parameters \cite{Worthey94}. Both methods recover hints of a more metal-rich disk while only pPXF recovers the presence of younger stars closer to the disk-plane. The uniformity of the mean age and metallicity maps recovered by PNKR suggests that they have been over-smoothed. This problem may be alleviated by appropriately tuning the sequence $\bv$ in \eqref{beta_vector}. For the initial tests presented here, we have limited ourselves to vary a scalar tuning-parameter $\beta$ as in \eqref{beta}. The effect of varying this are shown in Figure \ref{compare_h5}. Tuning the full sequence $\bv$ instead would alter the balance between age and metallicity components relative to velocity. This could enhance age and metallicity variations in the PNKR recoveries. Given the intrinsic degeneracy in age and metallicity recoveries (as demonstrated by the pPXF) for now we instead focus our comparison on the other components of the recovered $f$.

Kinematics reveal a significant difference between the quality of the two recoveries. The four right-most columns of Figure~\ref{compare_moments} show that kinematics are overall more faithfully recovered when using PNKR than pPXF. The reason for this can be seen in Figure \ref{compare_velocity_distributions}, where it becomes clear that pPXF's Gauss-Hermite expansions struggle to describe bimodal distributions (e.g. panels 4 and 6 of Figure \ref{compare_velocity_distributions}). These shortcomings where not fixed by using higher-order expansions (up to $h_8$) or less kinematic regularisation, both of which boosted the bimodal features at the expense of severe, negative oscillations in the wings. Other spectral modelling tools \cite{Ocvirk06,bayeslosvd} permit more suitable descriptions of the velocity distribution, while even pPXF itself offers an option to model the spectrum as a mixture of two independent components, which would very likely be beneficial in this instance. We stress however that the purpose of these comparisons is an initial (successful) validation of PNKR; the novelty of PNKR will be demonstrated below. 

Whilst PNKR achieves more accurate kinematic reconstructions overall, the situation is reversed in some locations. PNKR struggled to reconstruct moments $\mu_v, \sigma_v$, and $h_3$ along the disk plane where pPXF fares relatively well (see the central horizontal stripe in bottom two rows of Figure~\ref{compare_moments}, and panels 2 and 5 of Figure \ref{compare_velocity_distributions}). This appears to be a problem with over-smoothing in PNKR: spatial smoothing has been too strong in this region where the thin disk causes kinemtics to change over a small distance. This may be addressed in future work by making a component (AKA mixture-model) ansatz on the density $f$, and separately controlling the smoothing of each component. Furthermore, PNKR reconstructions show some artifacts around the edges (see the third row of Figure~\ref{compare_moments}) caused by the application of the matrix $\Mv^{-1}$ in the case of $s=1$, representing the adjoint embedding $E_s^*$. This could be addressed by artificially enlarging the domain $\Omega$.

\begin{figure}[ht!]
    \centering
    \includegraphics[width=\textwidth]{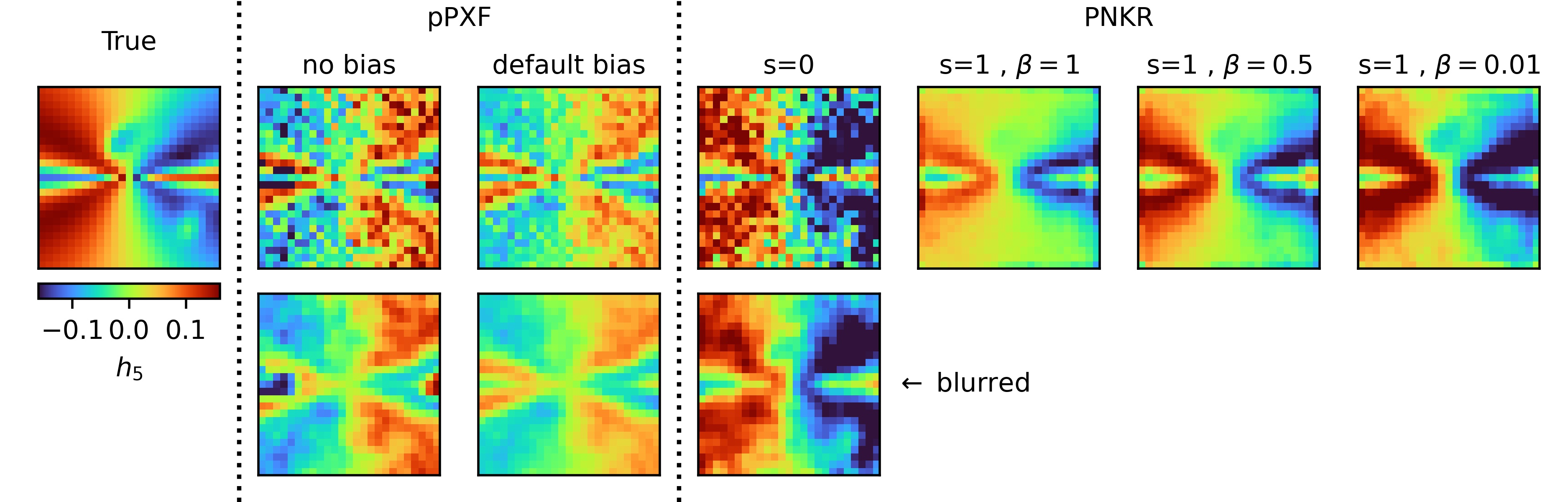}
    \caption{
    Comparison of recovered $h_5$ maps. Left/middle/right sections show the ground-truth/pPXF/PNKR reconstructions. The bottom row shows smoothed versions (blurred with a 0.75 pixel Gaussian kernel) of non-smooth recoveries. The two blue/green blobs in the ground truth are produced by a stellar stream contributing 1\% of the galaxy's stellar mass. These blobs are are not recovered by pPXF regardless of kinematic reguarisation (AKA bias) or smoothing. For PNKR, blobs are visible in the smoothed version of the piecewise-constant ($s=0$) recovery, however the image is somewhat noisy. For the piecewise-linear ($s=1$) case, after tuning the $\beta$ parameter to avoid over-smoothing, the blobs are recovered more prominently and more accurately.
    }
    \label{compare_h5}
\end{figure}

The main success of PNKR is demonstrated in Figure \ref{compare_h5}, which shows maps of the Gauss-Hermite coefficient $h_5$. The faint stellar stream is particularly prominent in the $h_5$ map, appearing as the two blue/green blobs in the left-most panel. In the pPXF recoveries, while the presence of two counter-rotating disks can still be seen inferred, the sign of $h_5$ is flipped relative to the ground truth and no blobs are visible. These conclusions do not change when we turn off kinematic regularisation (AKA bias in pPXF), or when we smooth the reconstructions (bottom row). The PNKR recoveries are significantly better. For the piecewise constant (i.e. $s=0$) case, the blobs are visible in a smoothed version of the recovery, however this image is overall more \emph{blobby}. By using piecewise linear basis functions (i.e. $s=1$) instead, the recovery is smooth by design. The first attempt with $\beta=1$ was overly smooth, but by reducing to $\beta=0.01$ we recover the stream features once again. Strategies for tuning $\beta$ will be explored in a future work. Compared to the smoothed $s=0$ image, in the $(s,\beta)=(1,0.01)$ recovery the blobs appear more cleanly against a smoother background. Furthermore, they are recovered more accurately in their colour and shape. This demonstrates the unique power of PNKR: by modelling the full datacube in a flexible, non-parametric way, weak galactic components can be detected and more accurately characterised.

\subsection{Robustness of the results}\label{ssec_robustness}

Lastly, we assess the robustness of our recoveries. To do this, we create a sample of 15 versions of the mock datacube with different noise realisations, each with an overall noise level of 1\%. We compute PNKR recoveries on the sample, using piecewise-liner basis functions and $\beta=1$. We calculate the median absolute error of the recovered velocity distributions (i.e. those shown in Figure \ref{compare_velocity_distributions}). The mean of this error over positions is 25\%. Quantifying this error is valuable for the astronomical community, to facilitate subsequent analyses (e.g. dynamical modelling) which require a given precision. To provide a useful guide for astronomers, in follow-up work we will quantify this error as a function of the noise level of the datacube.

\section{Conclusion}\label{sect_conclusion}

In this paper, we considered the extragalactic archaeology problem of reconstructing a galaxy's stellar population-kinematic distribution function from measured optical IFU datacubes. Mathematically, this amounts to solving a large scale inverse problem corresponding to an integral equation with a system structure. For solving this problem, we proposed the Projected Nesterov-Kaczmarz Reconstruction (PNKR) method, which incorporates physical prior information and leverages the system structure for numerical efficiency. We tested the PNKR method on a dataset simulated from a known ground truth density, and validated it by comparing our recoveries to those obtained by the widely used pPXF method. We have demonstrated the novel power of full datacube modelling to detect and characterise weak galactic features.

Our results also inspire the following future work: We will explore strategies for tuning the smoothing parameter $\bv$ and treating edge-artifacts. Furthermore we will investigate the possibility to incorporate additional prior information, such as a component ansatz on the distribution function, and treat other known systematics in IFU datasets such as continuum offsets from dust-attenuation and imperfect flux-calibration. Altogether, this is a promising new avenue for extragalactic archaeology; we will apply this method to galaxy IFU data to uncover weak features, such as merger remnants.


\section{Support}

F.\ Hinterer, S.\ Hubmer, and R.\ Ramlau were funded by the Austrian Science Fund (FWF): F6805-N36. P.\ Jethwa and G.\ van de Ven were funded by the Austrian Science Fund (FWF): F6811-N36. P.\ Jethwa and G.\ van de Ven acknowledge funding from the European Research Council (ERC) under the European Union's Horizon 2020 research and innovation programme under grant agreement No 724857 (Consolidator Grant ArcheoDyn). 

\bibliographystyle{plain}
{\footnotesize
\bibliography{mybib}
}

\end{document}